\documentclass[11pt,a4paper]{amsart}
\usepackage[T1]{fontenc}
\usepackage[utf8]{inputenc}
\usepackage{lmodern}
\usepackage[a4paper,margin=27mm]{geometry}
\usepackage{amsmath,amssymb,amsthm,mathtools,booktabs}
\usepackage{microtype}
\usepackage{enumitem}
\usepackage{listings}
\usepackage[hidelinks]{hyperref}
\hypersetup{pdftitle={Hyponormality of generalized Cesaro matrices of every positive integer order},pdfauthor={Yicen Ma},pdfsubject={Computer-assisted proof with exact rational certificates}}
\newtheorem{theorem}{Theorem}[section]
\newtheorem{lemma}[theorem]{Lemma}
\newtheorem{proposition}[theorem]{Proposition}

\theoremstyle{remark}
\newtheorem{remark}[theorem]{Remark}
\numberwithin{equation}{section}
\DeclareMathOperator{\diag}{diag}

\newcommand{\NN}{\mathbb N}
\newcommand{\HH}{\ell^2(\mathbb N_0)}
\newcommand{\E}{\mathbb E}
\newcommand{\Beta}{\mathrm B}
\newcommand{\fall}[2]{#1^{\underline{#2}}}
\newcommand{\cS}{\mathcal S}
\newcommand{\odd}{\mathrm{odd}}
\newcommand{\dd}{\,\mathrm d}

\newcommand{\tht}{\Theta_{\alpha,m}}
\setlist[enumerate]{leftmargin=*,itemsep=2pt,topsep=4pt}
\allowdisplaybreaks[1]
\title[Generalized Ces\`aro matrices]{Hyponormality of generalized Ces\`aro matrices\protect\\ of every positive integer order}
\author{Yicen Ma}
\address{Zhejiang University, Hangzhou, Zhejiang, China}
\date{September 18, 2026}
\subjclass[2020]{Primary 47B20; Secondary 47B37, 33C45, 65G20}
\keywords{Generalized Ces\`aro matrix, hyponormal operator, finite-rank defect, discrete Gram polynomial, computer-assisted proof}
\thanks{The finite verification programs and exact certificate data form an electronic supplement to this manuscript.}

\begin{document}
\begin{abstract}
For a positive integer $m$ and a real parameter $\alpha>-1$, consider the generalized Ces\`aro matrix on $\ell^2(\mathbb N_0)$ with entries
$m(n-j+1)_{m-1}/(n+\alpha+1)_m$ for $j\le n$.
We give a computer-assisted proof that this operator is hyponormal if and only if $\alpha\ge0$.
The main algebraic ingredient is an explicit decomposition of an auxiliary defect operator into a positive diagonal operator and at most $m$ rank-one operators. Discrete Gram polynomials determine the signs and coefficients of these terms. For nonnegative parameters, weighted estimates reduce positivity to scalar inequalities. A uniform analytic estimate covers every $m\ge32768$, and finite rational certificates cover the remaining orders and entire parameter intervals, without parameter sampling or truncating an infinite operator. The proof also gives a uniform weighted lower bound for the self-commutator. For $-1<\alpha<0$, finite-support vectors give negative quadratic forms for every order.
\end{abstract}
\maketitle

\section{Introduction and main result}
Write $(z)_q=z(z+1)\cdots(z+q-1)$, with $(z)_0=1$. For $m\in\NN=\{1,2,\ldots\}$ and $\alpha>-1$, let $M_{\alpha,m}$ denote the lower triangular matrix
\begin{equation}\label{eq:matrix}
 (M_{\alpha,m})_{nj}=
 \begin{cases}
 \displaystyle\frac{m(n-j+1)_{m-1}}{(n+\alpha+1)_m},&0\le j\le n,\\[4pt]
 0,&j>n.
 \end{cases}
\end{equation}
This is the positive-integer-order case of the generalized Ces\`aro matrices $(C^{(\alpha)},m)$ considered by Rhaly \cite{R18,R19}. A bounded operator $T$ on a Hilbert space is hyponormal when $T^*T-TT^*\ge0$.

For $m=2$, Rhaly proved hyponormality for $\alpha\in\{0\}\cup[1,\infty)$ and asked whether the interval $0<\alpha<1$ could be included \cite[Conjecture 3.2]{R18}. The corresponding third-order result covers $\alpha\in\{0,1\}\cup[2,\infty)$ \cite[Corollary 2.4]{R19}. The closing remark of \cite{R19} also invites consideration of arbitrary positive integer orders.

The unshifted case $\alpha=0$ must be distinguished from this parameter problem. Its hyponormality already follows from the classical subnormality of bounded Hausdorff operators, as explained in the update to \cite{R17}. Le and Rhaly obtained the diagonal interrupter formula for Ces\`aro matrices of all orders in \cite{LR18}. On the other hand, one cannot simply assume subnormality for noninteger shifts: already at order one, the distinction between integer and noninteger parameters is substantive; see \cite{RT19}. The result below concerns hyponormality and does not assert subnormality.

Set
\begin{equation}\label{eq:HD}
 H_m(t)=(t+1)_m,\qquad D_m(t)=(t+m+1)_m,
\end{equation}
and define
\begin{equation}\label{eq:R}
 R_{\alpha,m}=\diag\bigl(r_n(\alpha)\bigr)_{n\ge0},\qquad
 r_n(\alpha)=1-\frac{H_m(n+\alpha)}{D_m(n+\alpha)}.
\end{equation}
All the diagonal entries are positive for $\alpha>-1$.

\begin{theorem}\label{thm:main}
For every positive integer $m$ and every real $\alpha>-1$, the matrix $M_{\alpha,m}$ defines a bounded operator on $\HH$, and
\[
 M_{\alpha,m}\text{ is hyponormal}\quad\Longleftrightarrow\quad\alpha\ge0.
\]
For every $\alpha\ge0$, one has the quantitative estimate
\begin{equation}\label{eq:mainweight}
 M_{\alpha,m}^*M_{\alpha,m}-M_{\alpha,m}M_{\alpha,m}^*
 \ge\frac1{36}M_{\alpha,m}^*R_{\alpha,m}M_{\alpha,m}.
\end{equation}
\end{theorem}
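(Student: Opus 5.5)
Boundedness comes first. For fixed $n$, the row sums of $M_{\alpha,m}$ equal
\[
\frac{m\sum_{k=0}^{n}(k+1)_{m-1}}{(n+\alpha+1)_m}=\frac{(n+1)_m}{(n+\alpha+1)_m},
\]
which is bounded. Comparing with the continuous Hausdorff kernel $m(1-t)^{m-1}$ on $[0,1]$ and applying Schur's test with weights $(n+1)^{-1/2}$ then gives a bounded operator for every $\alpha>-1$.

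For the main equivalence, I will not work with $M^*M-MM^*$ directly. Instead I will factor through the auxiliary defect operator suggested by the definition of $R_{\alpha,m}$. The matrix is a weighted Hausdorff-type operator, so I expect an intertwining identity of the form $M = D^{-1}(\text{Toeplitz part})$, where the diagonal factor involves $(n+\alpha+1)_m$. Using the identity $(n+\alpha+1)_m=H_m(n+\alpha)$ and the shift relating $H_m$ to $D_m$, I would derive
\[
M^*M-MM^*=M^*\bigl(I-\Lambda\bigr)M+\text{(correction)},
\]
where $\Lambda$ is a shifted conjugate of $M$. The target is to show that the self-commutator equals $M^*K M$ for an explicit defect operator $K=R_{\alpha,m}-\sum_{i=1}^{m}c_i(\alpha)\,u_i\otimes u_i$. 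This is the decomposition "positive diagonal plus at most $m$ rank-one terms" announced in the abstract. The vectors $u_i$ should be built from discrete Gram (discrete Chebyshev) polynomials of degree $<m$, and these polynomials diagonalize the finite-difference structure coming from $(n-j+1)_{m-1}$. I will compute the coefficients $c_i(\alpha)$ in closed form via orthogonality sums. Hyponormality then reduces to $K\ge0$ on the closure of the range of $M$, which is all of $\ell^2$ since $M$ is lower triangular with nonzero diagonal, hence injective with dense range.

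For $\alpha\ge0$, I need $K\ge\frac1{36}R$, that is,
\[
\sum_i c_i|\langle x,u_i\rangle|^2\le\tfrac{35}{36}\langle Rx,x\rangle .
\]
By Cauchy--Schwarz in the $R$-weighted space, it suffices to bound the finite Gram matrix $\bigl(c_i^{1/2}c_j^{1/2}\langle R^{-1}u_i,u_j\rangle\bigr)_{i,j}$ by $35/36$ in operator norm. Its entries are explicit series in $n$ with rational-in-$n+\alpha$ terms. I will bound each entry analytically, using telescoping and integral comparisons. For large $m$, off-diagonal decay via Gram-polynomial orthogonality should give a Gershgorin bound uniform in $m\ge m_0$. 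For $m<m_0$, I would verify the matrix inequality with exact rational interval arithmetic on $\alpha$, covering $\alpha\ge A$ with an asymptotic expansion in $1/\alpha$ and $[0,A]$ with finitely many subintervals on which monotonicity bounds hold.

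For $-1<\alpha<0$, I will pick a vector supported on $\{0,1\}$, or on $\{0\}$ alone, and evaluate $\langle (M^*M-MM^*)x,x\rangle$ explicitly. The candidate is $x=e_0$, for which
\[
\langle (M^*M-MM^*)e_0,e_0\rangle=\|Me_0\|^2-\|M^*e_0\|^2=\sum_n |M_{n0}|^2-\sum_j|M_{0j}|^2 .
\]
If this is not negative, I will take a two-point combination optimized in the ratio, and show negativity by expanding near the diagonal entries $m!/(\alpha+1)_m$, which blow up relative to the others as the dependence on $\alpha$ changes. The main obstacle is the uniform estimate for the Gram matrix of the rank-one terms: obtaining explicit, $m$-uniform control of $\langle R^{-1}u_i,u_j\rangle$ is delicate, and the threshold $32768$ signals that crude bounds only kick in late. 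I would first try a sharp asymptotic for $c_i\|R^{-1/2}u_i\|^2$ as a function of $i/m$.
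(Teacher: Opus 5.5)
\textbf{Gap 1: the argument for $-1<\alpha<0$ cannot work.} The candidate $e_0$ always fails, since here $\|Me_0\|^2-\|M^*e_0\|^2=\sum_{n\ge1}M_{n0}^2>0$.

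More fundamentally, vectors of any \emph{fixed} finite support $F$ (such as $\{0,1\}$) cannot detect non-hyponormality for all negative parameters. At $\alpha=0$ every $\lambda_{m,r}(0)$ vanishes, so the self-commutator is $M_{0,m}^*R_{0,m}M_{0,m}$. Its form $\sum_n r_n(0)|(M_{0,m}x)_n|^2$ is strictly positive for $x\ne0$, hence positive definite on $\operatorname{span}\{e_j:j\in F\}$. The finitely many entries of the compressed form depend continuously on $\alpha$ (dominated convergence in the column sums). So the compression stays positive definite for $-\delta_F<\alpha<0$. The diagonal entries $m!/(\alpha+1)_m$ blow up only as $\alpha\to-1$, so your expansion can at best reach parameters near $-1$.

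The obstruction sits at infinity. It is the $r=0$ defect term $m^2\alpha\,v_{m,0}v_{m,0}^*$, where $v_{m,0}(n)\sim1/n$ and $r_n(\alpha)\sim m^2/n$, so $v_{m,0}$ has logarithmically divergent $R^{-1}$-weighted norm. The paper takes $y^{(N)}=Lx^{(N)}$ with $x^{(N)}=\sum_{n\le N}e_n$. Then $My^{(N)}=x^{(N)}$ and $M^*y^{(N)}=Bx^{(N)}$, and the form at $y^{(N)}$ equals $m^2\alpha(\log N)^2+O(\log N)$, which is negative for large $N$. The supports must grow as $\alpha\to0^-$.

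\textbf{Gap 2: the positive-side reduction is mis-specified and incomplete.} The rank-one terms do not all carry a minus sign. The coefficient $\lambda_{m,r}(\alpha)=(2r+1)\binom{m+r}{2r+1}^2\alpha\prod_{j=1}^r(\alpha^2-j^2)$ changes its sign pattern whenever $\alpha$ crosses an integer below $m$. The $r=0$ term is nonnegative for $\alpha\ge0$ and cannot enter your Gram matrix, because its entry $\langle R^{-1}u,u\rangle$ is infinite; it must be discarded.

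Your plan also has no mechanism for $1\le\alpha\le m-1$ with $m$ large. The Gershgorin bound is claimed uniform only in $m$, and the $1/\alpha$ expansion is proposed only for $m<m_0$. The structure that makes the scalar estimates tractable is specific to $0\le\alpha\le1$: negative terms exactly at odd $r$, monotonicity of $S_{m,r}$ in $\alpha$, and the $28/27$ transfer from $[1/2,1]$ to $\alpha=1/2$.

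The missing idea is the shift reduction. Take $T=M_{\alpha,m}$, $A=M_{\alpha+1,m}$ and $P_0=e_0e_0^*$. Then $TS=SA$, $S^*R_{\alpha,m}S=R_{\alpha+1,m}$, and $A^*A-AA^*=S^*(T^*T-TT^*)S+S^*TP_0T^*S$. So the weighted bound passes from $\alpha$ to $\alpha+1$, and only $0\le\alpha\le1$ needs to be estimated.

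\textbf{Smaller points.} The exact factorization, with no correction term, comes from $B=M^*L$ and $BM=M^*$. Dense range does not follow from lower triangularity with nonzero diagonal: the adjoint of $I-2S$ annihilates $(2^{-n})_n$. Here it holds because $MLx=x$ with $L(c_{00})\subset c_{00}$.
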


The diagonal entries of $R_{\alpha,m}$ tend to zero. Thus \eqref{eq:mainweight} is a weighted lower bound, not a positive spectral gap relative to the identity. Neither the constant $1/36$ nor the order threshold used below is asserted to be optimal.

The proof has two distinct parts. The operator identities, parameter reductions and all infinite-sum estimates are proved analytically. The remaining finite inequalities are checked with integer and rational arithmetic. Specifically, orders $1$ through $20$ are treated by polynomial coefficient certificates, orders $21$ through $32767$ by directed-rounding bounds on complete order--parameter rectangles, and all larger orders by an explicit analytic estimate. The supplement recomputes these certificates; the finite computation is a part of the proof, not numerical evidence for an unproved limiting assertion.

Throughout, $c_{00}$ denotes the finitely supported sequences. For a vector $v\in\HH$, $vv^*$ is the rank-one operator whose matrix entries are $v_i\overline{v_j}$. All vectors used in rank-one formulas below are real. Standard beta and gamma identities used in the proof can be found in \cite{DLMF}.

\section{Finite differences and an explicit defect decomposition}\label{sec:defect}
We first establish a formula valid for every $m\ge1$ and $\alpha>-1$. This section does not use the finite certificates.

\subsection{Boundedness and a finite-support right inverse}
Fix $m,\alpha$ and put $c=\min\{1,1+\alpha\}>0$. For $j\le n$,
\[
 0\le (M_{\alpha,m})_{nj}\le \frac{m^m}{c^m(n+1)}.
\]
The Schur test with weights $(n+1)^{-1/2}$ proves boundedness, using
\[
 \sum_{j=0}^n(j+1)^{-1/2}\le2\sqrt{n+1},\qquad
 \sum_{n=j}^{\infty}(n+1)^{-3/2}\le\frac3{\sqrt{j+1}}.
\]
Define $L=L_{\alpha,m}$ only on $c_{00}$ by
\begin{equation}\label{eq:L}
 Le_j=\frac{(j+\alpha+1)_m}{m!}
       \sum_{s=0}^m(-1)^s\binom ms e_{j+s}.
\end{equation}
The coefficient identity $(1-z)^m(1-z)^{-m}=1$ gives
\begin{equation}\label{eq:ML}
 M_{\alpha,m}Lx=x\qquad(x\in c_{00}).
\end{equation}
Moreover, $L(c_{00})$ is dense in $\HH$. Indeed, a vector orthogonal to every $Le_j$ has vanishing $m$th forward differences and is therefore a polynomial sequence of degree at most $m-1$. The only such sequence in $\ell^2$ is zero. No boundedness of $L$ is used or claimed.

Initially define $B_0=M_{\alpha,m}^*L$ on $c_{00}$. Each column of $B_0$ has finite support. We will prove its boundedness from its quadratic form, before using any identity involving its bounded extension.

\subsection{An auxiliary polynomial and the columns of \texorpdfstring{$B_0$}{B0}}
Write $\fall{x}{q}=x(x-1)\cdots(x-q+1)$ and set
\begin{equation}\label{eq:Fpoly}
 F_m(x,z)=\sum_{q=0}^{m-1}(-1)^q
 \frac{(m+q)!}{(q!)^2(m-q-1)!}
 \fall{x}{q}(z+1)_{m-q-1}.
\end{equation}
For this section, abbreviate $d_j=D_m(j+\alpha)$.

\begin{lemma}\label{lem:columns}
The $j$th column of $B_0$ is
\begin{equation}\label{eq:Bentries}
 b_{ij}=\begin{cases}
 F_m(i+\alpha,j+\alpha)/d_j,&0\le i\le j+m-1,\\
 (-1)^m H_m(j+\alpha)/d_j,&i=j+m,\\
 0,&i>j+m.
 \end{cases}
\end{equation}
\end{lemma}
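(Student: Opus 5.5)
The plan is to compute the $j$th column of $B_0$ directly from the definitions. Since $Le_j$ has finite support, for each $i$
\[
 b_{ij}=\langle M_{\alpha,m}^*Le_j,e_i\rangle=\frac{H_m(j+\alpha)}{m!}\sum_{s=0}^m(-1)^s\binom ms (M_{\alpha,m})_{j+s,i}.
\]
Here I use $(j+\alpha+1)_m=H_m(j+\alpha)$, and $d_j=D_m(j+\alpha)=(j+\alpha+m+1)_m$. The two easy cases settle themselves.
\begin{itemize}
\item If $i>j+m$, every entry $(M_{\alpha,m})_{j+s,i}$ vanishes by lower triangularity.
\item If $i=j+m$, only $s=m$ survives. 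It contributes $(-1)^m m(1)_{m-1}/(j+m+\alpha+1)_m=(-1)^m m!/d_j$, which gives $(-1)^mH_m(j+\alpha)/d_j$.
\end{itemize}

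For $0\le i\le j+m-1$, I first remove the triangularity constraint. The factor $(n-i+1)_{m-1}$ vanishes for $i-m+2\le n\le i$. Every index $n=j+s<i$ with $s\ge0$ satisfies $n\ge j\ge i-m+1$, so $n-i+1\in\{-(m-2),\dots,0\}$ and the polynomial formula already gives $0$ there. (For $m=1$ the range $i\le j$ means no index is excluded.) Hence, with $x=i+\alpha$ and $z=j+\alpha$, we may sum over all $0\le s\le m$:
\[
 b_{ij}=\frac{H_m(z)}{(m-1)!}\sum_{s=0}^m(-1)^s\binom ms\frac{(z+s+1-x)_{m-1}}{(z+s+1)_m}.
\]

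Next I expand the numerator with the rising-factorial Chu--Vandermonde identity $(u+v)_n=\sum_q\binom nq(u)_q(v)_{n-q}$, taking $u=-x$ and $v=a=z+s+1$. Since $(-x)_q=(-1)^q\fall xq$, this gives
\[
 (a-x)_{m-1}=\sum_{q=0}^{m-1}\binom{m-1}q(-1)^q\fall xq\,(a)_{m-1-q}.
\]
Then $(a)_{m-1-q}/(a)_m=1/(a+m-1-q)_{q+1}$. Each $s$-sum is evaluated with the Beta representation
\[
 \frac1{(b)_{q+1}}=\frac1{q!}\int_0^1t^{b-1}(1-t)^q\dd t,\qquad b=z+s+m-q>0.
\]
The alternating binomial sum inside the integral collapses to $(1-t)^m$, leaving
\[
 \frac{\Beta(z+m-q,m+q+1)}{q!}=\frac{(m+q)!\,\Gamma(z+m-q)}{q!\,\Gamma(z+2m+1)}.
\]
Multiplying by $H_m(z)=\Gamma(z+m+1)/\Gamma(z+1)$ and using $\Gamma(z+m-q)/\Gamma(z+1)=(z+1)_{m-q-1}$, the product becomes $(z+1)_{m-q-1}\,\Gamma(z+m+1)/\Gamma(z+2m+1)=(z+1)_{m-q-1}/d_j$. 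The scalar coefficients combine as
\[
 \frac1{(m-1)!}\binom{m-1}q\frac{(m+q)!}{q!}=\frac{(m+q)!}{(q!)^2(m-q-1)!},
\]
which reproduces $F_m(x,z)/d_j$ from \eqref{eq:Fpoly}.

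The main obstacle is bookkeeping rather than an idea. One point is justifying the removal of the support constraint in the range $i\le j+m-1$, including the edge cases $m=1$ and $i<j$. The other is checking that the Beta integrals converge, which holds because $z+m-q\ge z+1>0$ for $\alpha>-1$, so every manipulation is valid for all real $\alpha>-1$ and not just for integer shifts.
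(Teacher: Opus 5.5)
Your proposal is correct and follows the paper's own route: expand $(z+s+1-x)_{m-1}$ with the rising-factorial Vandermonde identity, evaluate the alternating $s$-sum by the beta integral (this is the paper's identity \eqref{eq:betadifference} at $z=j+\alpha+m-q$), check that the formally added sub-diagonal terms vanish, and read off the entry $i=j+m$ from the single surviving term $s=m$. One small slip: $(n-i+1)_{m-1}$ vanishes for $i-m+1\le n\le i-1$, not for $i-m+2\le n\le i$ (at $n=i$ it equals $(m-1)!$); your next sentence states the correct range, so the argument is unaffected.
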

\begin{proof}
For $i\le j$, insert \eqref{eq:L} into $M^*Le_j$ and use
\[
 (z-x+1)_{m-1}=\sum_{q=0}^{m-1}(-1)^q\binom{m-1}{q}
 \fall{x}{q}(z+1)_{m-1-q}.
\]
The finite difference identity
\begin{equation}\label{eq:betadifference}
 \sum_{s=0}^m(-1)^s\binom ms\frac1{(z+s)_{q+1}}
 =\frac{(q+1)_m}{(z)_{q+m+1}}
\end{equation}
follows by writing the reciprocal rising factorial as a beta integral and summing $(1-t)^m$ under that finite integral. Apply \eqref{eq:betadifference} with $z=j+\alpha+m-q$. Cancellation gives exactly \eqref{eq:Fpoly} divided by $d_j$.

For $i=j+t$, $1\le t<m$, the terms formally added below the diagonal contain a zero factor in a rising factorial of length $m-1$; the same formula is consequently valid. At $i=j+m$, the actual column has only its final finite-difference term. The formal polynomial formula has one additional term, from $s=0$, equal to $(-1)^{m-1}$. Hence the actual final entry exceeds the polynomial value by $(-1)^m$, and is the value displayed in \eqref{eq:Bentries}. All later entries vanish.
\end{proof}

\subsection{The unshifted Gram matrix and a boundary sum}
At $\alpha=0$, for each $0\le q<m$, finite convolution gives
\begin{equation}\label{eq:polymoment}
 M_{0,m}\bigl(\fall{j}{q}\bigr)(n)
 =\binom{m+q}{q}^{-1}\fall{n}{q}.
\end{equation}
This is a rowwise finite algebraic identity; the polynomial sequences are not being regarded as elements of $\ell^2$. Since the $m$th difference of the right-hand side vanishes, \eqref{eq:polymoment} implies
\[
 \sum_i\fall{i}{q}b_{ij}=0\qquad(0\le q<m,\ \alpha=0).
\]
For $i<j$, the $j$th column is a polynomial of degree at most $m-1$ on the entire support of the $i$th column. Thus distinct columns are orthogonal. If $g_j$ is the final entry of the $j$th column, the final-entry correction in Lemma~\ref{lem:columns} also gives
\begin{equation}\label{eq:norm0}
 \|B_0e_j\|^2=(-1)^mg_j=\frac{H_m(j)}{D_m(j)}
 \qquad(\alpha=0).
\end{equation}
This recovers the known unshifted diagonal interrupter identity; compare \cite{LR18}.

For an integer $A\ge0$, the matrix $B_{A,m}$ is the tail of the matrix $B_{0,m}$ after deleting the first $A$ rows and columns. Subtracting the omitted rows from \eqref{eq:norm0} therefore yields
\begin{equation}\label{eq:boundaryinteger}
 (I-B_{A,m}^*B_{A,m})_{ij}
 =r_i(A)\delta_{ij}
 +\frac{\displaystyle\sum_{x=0}^{A-1}F_m(x,i+A)F_m(x,j+A)}
 {D_m(i+A)D_m(j+A)}.
\end{equation}
At this point this identity can be read entrywise, involving only finite column sums.

For a polynomial $f(x)$, let $\cS_\alpha[f]$ denote the polynomial continuation in $A$ of $\sum_{x=0}^{A-1}f(x)$, evaluated at $A=\alpha$. If the coefficients of $f$ also involve $\alpha$, the continuation is performed termwise in $x$. After clearing denominators, both sides of \eqref{eq:boundaryinteger} are polynomials in $A$. Their equality at all nonnegative integers proves, for every real $\alpha>-1$,
\begin{equation}\label{eq:boundarygeneral}
 (I-B_0^*B_0)_{ij}
 =r_i(\alpha)\delta_{ij}
 +\frac{\cS_\alpha[F_m(x,i+\alpha)F_m(x,j+\alpha)]}{d_i d_j}.
\end{equation}
The left side here denotes the finite-column Gram form on $c_{00}$. Importantly, $\cS_\alpha$ need not be positive when $\alpha$ is not an integer. We continue algebraic identities, not positivity of an inner product.

\subsection{Discrete Gram orthogonalization}
Define the monic discrete Gram polynomials
\begin{equation}\label{eq:Qr}
 Q_r(x;\alpha)=\frac{r!}{(2r)!}\Delta^r
 \bigl[\fall{x}{r}\fall{(x-\alpha)}{r}\bigr],
 \qquad \Delta f(x)=f(x+1)-f(x).
\end{equation}
\begin{lemma}\label{lem:gram}
The polynomials $Q_r$ are orthogonal for the formal functional $\cS_\alpha$, and
\begin{equation}\label{eq:gramnorm}
 \cS_\alpha[Q_r^2]
 =\frac{(r!)^4}{(2r)!}\binom{\alpha+r}{2r+1}
 =\frac{\alpha\prod_{j=1}^r(\alpha^2-j^2)}
 {(2r+1)\binom{2r}{r}^2}.
\end{equation}
For $q\ge r$,
\begin{equation}\label{eq:mixedmoment}
 \cS_\alpha[\fall{x}{q}Q_r(x;\alpha)]
 =\frac{(r!)^2(q!)^2}{(2r)!(q-r)!}
   \binom{\alpha+r}{q+r+1};
\end{equation}
for $q<r$, the mixed moment is zero.
\end{lemma}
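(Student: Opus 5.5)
The plan is to do everything at nonnegative integers $A$ and then continue polynomially in $A$. For an integer $A\ge0$, the functional $\cS_A$ is an honest finite sum $\sum_{x=0}^{A-1}$. Each claimed identity, namely the mixed moment \eqref{eq:mixedmoment}, orthogonality, and \eqref{eq:gramnorm}, has both sides polynomial in $A$. There is one subtlety: the coefficients of $Q_r(x;\alpha)$ depend polynomially on $\alpha$, so I will take the joint continuation in which $\alpha=A$ appears both in the integrand and as the summation limit. Equality at all integers $A\ge0$ then gives equality for real $\alpha$, exactly as in the passage from \eqref{eq:boundaryinteger} to \eqref{eq:boundarygeneral}.

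\textbf{The key computation.} Fix an integer $A$ and set $w(x)=\fall{x}{r}\fall{(x-A)}{r}$. This vanishes at $x=0,1,\dots,r-1$ and at $x=A,A+1,\dots,A+r-1$. Consequently $\Delta^k w$ vanishes at $x=0$ and at $x=A$ for all $k<r$. I will then apply summation by parts $r$ times to $\sum_{x=0}^{A-1}f(x)\Delta^r w(x)$. The boundary terms drop out, leaving
\[
(-1)^r\sum_{x=0}^{A-1+r}(\Delta^r f)(x-r)\,w(x),
\]
up to index bookkeeping. With the convention $\sum_{x=0}^{A-1}g(x)=\sum_{x}(\dots)$, the relevant range is $0\le x\le A+r-1$. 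On the extended points $A\le x\le A+r-1$ and $0\le x<r$, the weight $w$ vanishes, so nothing is lost.

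For $f=\fall{x}{q}$ we have $\Delta^r\fall{x}{q}=\fall{q}{r}\fall{x}{q-r}$. This is zero when $q<r$, which gives the vanishing mixed moments, and hence orthogonality, since $Q_r$ is monic of degree $r$ (the leading coefficient of $\Delta^r$ applied to a degree-$2r$ monic polynomial is $(2r)!/r!$). For $q\ge r$ the remaining sum is
\[
\sum_x \fall{(x-r)}{q-r}\,\fall{x}{r}\,\fall{(x-A)}{r}.
\]
I will rewrite $\fall{(x-A)}{r}=(-1)^r\fall{(A+r-1-x)}{r}$ and $\fall{x}{r}\fall{(x-r)}{q-r}=\fall{x}{q}$. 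The sum becomes $(-1)^r\sum_x \fall{x}{q}\fall{(A+r-1-x)}{r}$, which is a Vandermonde/Chu-type convolution:
\[
\sum_{x}\binom{x}{q}\binom{N-x}{r}=\binom{N+1}{q+r+1},\qquad N=A+r-1.
\]
This yields $q!\,r!\binom{A+r}{q+r+1}$. Collecting the factors $r!/(2r)!$, $\fall{q}{r}=q!/(q-r)!$, and the sign $(-1)^r\cdot(-1)^r=1$ gives \eqref{eq:mixedmoment}.

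\textbf{The norm.} By orthogonality, $\cS[Q_r^2]=\cS[\fall{x}{r}Q_r]$, because $Q_r-\fall{x}{r}$ has degree below $r$ and falling factorials span. Taking $q=r$ in \eqref{eq:mixedmoment} gives the first form of \eqref{eq:gramnorm}. For the second form I will expand
\[
\binom{\alpha+r}{2r+1}=\frac{(\alpha+r)(\alpha+r-1)\cdots(\alpha-r)}{(2r+1)!}=\frac{\alpha\prod_{j=1}^r(\alpha^2-j^2)}{(2r+1)!}.
\]
Then $(r!)^4/((2r)!(2r+1)!)=1/\bigl((2r+1)\binom{2r}{r}^2\bigr)$.

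\textbf{Expected obstacle.} The main difficulty is the bookkeeping in the $r$-fold summation by parts: getting the shifts and index ranges right, and confirming that all boundary terms vanish at both ends. Related to this is justifying the joint polynomial continuation in $A$ when the weight itself depends on $A$. I would handle the summation by parts via the clean identity
\[
\sum_{x=0}^{A-1} f\,\Delta g=-\sum_{x=0}^{A-1}(\Delta f)(x)\,g(x+1)
\]
when $g(0)=g(A)=0$, and iterate it. Alternatively, I could expand $\Delta^r$ as an alternating binomial sum and apply the convolution identity directly.
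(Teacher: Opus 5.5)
Your proposal is correct and follows essentially the same route as the paper: $r$-fold summation by parts against the weight $\fall{x}{r}\fall{(x-A)}{r}$, then the Vandermonde convolution giving $\binom{A+r}{q+r+1}$, the monic reduction $\cS_\alpha[Q_r^2]=\cS_\alpha[\fall{x}{r}Q_r]$, and polynomial continuation in $A$. (The paper writes the summation-by-parts result with $\nabla^r f$ on $r\le x\le A-1$, which is your $(\Delta^r f)(x-r)$ once the zeros of the weight are discarded.) On the bookkeeping you flagged: iterating your one-step identity on the shifted ranges needs $\Delta^k w$ to vanish at $x=j$ and $x=A+j$ for $0\le j\le r-1-k$, not only at $0$ and $A$, and this follows from the zeros of $w$ in exactly the same way.
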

\begin{proof}
First take an integer $A$ larger than all degrees involved. The polynomial
$W(x)=\fall{x}{r}\fall{(x-A)}{r}$ vanishes at $0,\ldots,r-1$ and $A,\ldots,A+r-1$. Repeated finite summation by parts therefore gives
\[
 \sum_{x=0}^{A-1}\Delta^rW(x)f(x)
 =(-1)^r\sum_{x=r}^{A-1}W(x)\nabla^rf(x),
 \qquad\nabla f(x)=f(x)-f(x-1).
\]
This proves orthogonality to lower degrees. For $f(x)=\fall{x}{q}$, use
$\nabla^r f(x)=q!\fall{(x-r)}{q-r}/(q-r)!$.
The remaining sum, with its signs cancelled, is a constant multiple of
\[
 \sum_{x=q}^{A-1}\binom{x}{q}\binom{A-x+r-1}{r}
 =\binom{A+r}{q+r+1},
\]
by Vandermonde's identity. Including the normalization in \eqref{eq:Qr} gives \eqref{eq:mixedmoment}. Since $Q_r$ is monic, its squared norm equals its mixed moment with $\fall{x}{r}$, giving \eqref{eq:gramnorm}. All these expressions are polynomial in $A$, so the identities extend to every real $\alpha$.
\end{proof}

For $0\le r<m$, put $d=m-r-1$ and define the monic degree-$d$ polynomial
\begin{equation}\label{eq:pold}
 p_{m,r}(n;\alpha)=\sum_{s=0}^{d}\binom ds
 \frac{(m+r+1)_s(r+1-\alpha)_s}{(2r+2)_s}
 (n+\alpha+1)_{d-s}.
\end{equation}
Equations \eqref{eq:Fpoly} and \eqref{eq:mixedmoment} imply
\begin{equation}\label{eq:Fexpansion}
 F_m(x,n+\alpha)=\sum_{r=0}^{m-1}(-1)^r
 \frac{(m+r)!}{(r!)^2(m-r-1)!}\,
 p_{m,r}(n;\alpha)Q_r(x;\alpha).
\end{equation}
For clarity, the coefficient calculation first takes $\alpha=A$ a sufficiently large integer and divides the mixed moment by \eqref{eq:gramnorm}. In the term $q=r+s$, the ratio of binomial coefficients becomes
\[
 \frac{\binom{A+r}{2r+s+1}}{\binom{A+r}{2r+1}}
 =\frac{(-1)^s(r+1-A)_s}{(2r+2)_s}.
\]
This cancels the additional sign $(-1)^s$ and gives \eqref{eq:pold}. The resulting expression has no parameter-dependent denominator, so \eqref{eq:Fexpansion} extends polynomially to all $\alpha$, including parameters at which some formal squared norms vanish.

\begin{theorem}[Defect decomposition]\label{thm:defect}
For $m\ge1$ and $\alpha>-1$, $B_0=M_{\alpha,m}^*L_{\alpha,m}$ has a unique bounded extension $B=B_{\alpha,m}$, and
\begin{align}
 BM_{\alpha,m}&=M_{\alpha,m}^*,\label{eq:BM}\\
 I-B^*B&=R_{\alpha,m}+\sum_{r=0}^{m-1}
 \lambda_{m,r}(\alpha)v_{m,r}v_{m,r}^*,\label{eq:defect}\\
 M_{\alpha,m}^*M_{\alpha,m}-M_{\alpha,m}M_{\alpha,m}^*
 &=M_{\alpha,m}^*(I-B^*B)M_{\alpha,m},\label{eq:transfer}
\end{align}
where
\begin{equation}\label{eq:vlambda}
 v_{m,r}(n)=\frac{p_{m,r}(n;\alpha)}{D_m(n+\alpha)},\qquad
 \lambda_{m,r}(\alpha)
 =(2r+1)\binom{m+r}{2r+1}^{\!2}
 \alpha\prod_{j=1}^r(\alpha^2-j^2).
\end{equation}
\end{theorem}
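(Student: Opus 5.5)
The plan has four steps. First we establish \eqref{eq:defect} as an identity of quadratic forms on $c_{00}$. Then we use it to show that $B_0$ is bounded. Then we extend \eqref{eq:BM} by density. Finally \eqref{eq:transfer} follows formally.

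We begin with the defect formula on $c_{00}$. By \eqref{eq:boundarygeneral}, for $i,j\ge0$ the finite-column Gram entries satisfy $(I-B_0^*B_0)_{ij}=r_i(\alpha)\delta_{ij}+\cS_\alpha[F_m(x,i+\alpha)F_m(x,j+\alpha)]/(d_id_j)$. We substitute the expansion \eqref{eq:Fexpansion} into both factors and use the orthogonality of Lemma~\ref{lem:gram}. The formal functional is bilinear, so the double sum collapses to its diagonal. This gives
\[
\cS_\alpha[F_mF_m]=\sum_{r=0}^{m-1}\Bigl(\frac{(m+r)!}{(r!)^2(m-r-1)!}\Bigr)^2\cS_\alpha[Q_r^2]\,p_{m,r}(i;\alpha)p_{m,r}(j;\alpha).
\]
Inserting \eqref{eq:gramnorm}, we check that $\bigl(\tfrac{(m+r)!}{(r!)^2(m-r-1)!}\bigr)^2\tfrac{1}{(2r+1)\binom{2r}{r}^2}=(2r+1)\binom{m+r}{2r+1}^2$. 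This holds because $\tfrac{(m+r)!}{(r!)^2(m-r-1)!}=\binom{m+r}{2r+1}\tfrac{(2r+1)!}{(r!)^2}=(2r+1)\binom{2r}{r}\binom{m+r}{2r+1}$. Thus the entries agree with those of $R_{\alpha,m}+\sum_r\lambda_{m,r}v_{m,r}v_{m,r}^*$. No parameter-dependent division occurs, so this holds for every $\alpha>-1$.

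Next comes boundedness. Each $v_{m,r}$ lies in $\ell^2$, since $p_{m,r}$ has degree $m-r-1$ while $D_m$ has degree $m$, so $v_{m,r}(n)=O(n^{-1-r})$. Also $0<r_n<1$. Hence the right-hand side $T$ of \eqref{eq:defect} is a bounded self-adjoint operator. For $x\in c_{00}$ we get $\|B_0x\|^2=\langle(I-T)x,x\rangle\le(1+\|T\|)\|x\|^2$. So $B_0$ is bounded on the dense set $c_{00}$ and extends uniquely to $B$. The form identity then extends by continuity, giving \eqref{eq:defect}. This is the step I consider the crux: boundedness must be derived from the form, not assumed. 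That is exactly why the algebraic identity is established entrywise first, on finitely supported vectors, where all sums are finite.

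Third, we prove \eqref{eq:BM}. For $x\in c_{00}$, \eqref{eq:ML} gives $BM_{\alpha,m}(Lx)=B_0'$ applied to $Lx$, which needs care. Instead, note that $B(Lx)$ is not obviously defined, so we argue from the other side: $M^*=M^*(ML)$ on $c_{00}$ is not the right form either. I would therefore use the identity $B_0=M^*L$ directly. For $y=Lx$ with $x\in c_{00}$, we have $y\in c_{00}$ and $BMy=BM Lx=Bx=B_0x=M^*Lx=M^*y$. Since $L(c_{00})$ is dense and both $BM$ and $M^*$ are bounded, \eqref{eq:BM} follows.

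Finally, \eqref{eq:transfer} is immediate. Taking adjoints in \eqref{eq:BM} gives $M=M^*B^*$. Then $MM^*=M^*B^*BM$, and hence $M^*M-MM^*=M^*(I-B^*B)M$.
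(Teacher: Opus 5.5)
Your proposal is correct and follows the paper's proof essentially step for step. You substitute \eqref{eq:Fexpansion} into \eqref{eq:boundarygeneral} and use Gram orthogonality, and your explicit check of the coefficient identity is exactly the ``simplification'' the paper leaves implicit; you then get boundedness of $B_0$ from the resulting bounded form, extend \eqref{eq:BM} by density of $L(c_{00})$, and take adjoints for \eqref{eq:transfer}. The hesitant false start at the beginning of your third step can simply be deleted: once $B$ is bounded it is defined everywhere, and your chain $BMLx=Bx=B_0x=M^*Lx$ is precisely the paper's argument.
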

\begin{proof}
Substitute \eqref{eq:Fexpansion} into \eqref{eq:boundarygeneral} and apply Lemma~\ref{lem:gram}. Simplification of the squared expansion coefficients times the norms gives precisely \eqref{eq:vlambda}.

First interpret \eqref{eq:defect} on $c_{00}$. Since $p_{m,r}$ is monic of degree $m-r-1$, $v_{m,r}(n)=O(n^{-r-1})$, so all these vectors belong to $\ell^2$. The right side of \eqref{eq:defect} is a bounded quadratic form. Consequently $\|B_0x\|^2\le C\|x\|^2$ on $c_{00}$, and $B_0$ extends boundedly. The form identity then holds as a bounded-operator identity. For $x\in c_{00}$,
\[
 BM(Lx)=Bx=M^*Lx.
\]
The density of $L(c_{00})$ proves \eqref{eq:BM}. Taking adjoints and multiplying gives \eqref{eq:transfer}.
\end{proof}

\section{Parameter reduction and the negative directions}\label{sec:reduction}
\subsection{A positive-coefficient formula}
\begin{lemma}\label{lem:pnew}
With $d=m-r-1$, one has
\begin{equation}\label{eq:pnew}
 p_{m,r}(n;\alpha)=\sum_{q=0}^{d}\binom dq^{\!2}q!
 \frac{(r+1-\alpha)_q}{(2r+2)_q}
 (n+r+2+q)_{d-q}.
\end{equation}
For $r\ge1$, $n\ge0$ and $0\le a\le b\le1$,
\begin{equation}\label{eq:pmono}
 0<p_{m,r}(n;b)\le p_{m,r}(n;a).
\end{equation}
\end{lemma}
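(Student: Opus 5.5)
The identity \eqref{eq:pnew} is a transformation between two terminating hypergeometric sums. The inequality \eqref{eq:pmono} then follows by inspecting signs in \eqref{eq:pnew}. Throughout, abbreviate
\[
 N=n+r+2,\qquad u=r+1-\alpha,\qquad d=m-r-1,
\]
and note that $m+r+1=2r+2+d$ and $n+\alpha+1=N-u$.

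\emph{Step 1: rewrite both sides as ${}_3F_2$ series.} For the right side of \eqref{eq:pnew}, use $\binom dq^2q!=(-d)_q^2/q!$ and $(N+q)_{d-q}=(N)_d/(N)_q$. This gives
\[
 \text{RHS}=(N)_d\;{}_3F_2\!\left(\begin{matrix}-d,\,-d,\,u\\ 2r+2,\,N\end{matrix};1\right).
\]
For the definition \eqref{eq:pold}, use $\binom ds=(-1)^s(-d)_s/s!$ and $(N-u)_{d-s}=(-1)^s(N-u)_d/(1-d-N+u)_s$. This gives
\[
 p_{m,r}=(N-u)_d\;{}_3F_2\!\left(\begin{matrix}-d,\,2r+2+d,\,u\\ 2r+2,\,1-d-N+u\end{matrix};1\right).
\]
Since $u$ and $n$ enter polynomially, it suffices to prove equality for generic real $u$ and $N$. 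The result then extends by polynomial continuation, exactly as in the proof of Theorem~\ref{thm:defect}.

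\emph{Step 2: apply a Sheppard (or Whipple) transformation.} The second series is Saalschützian: its denominator parameters sum to one more than its numerator parameters. I will apply the Sheppard transformation of a terminating ${}_3F_2$ (DLMF 16.4.11), keeping $-d$ and $u$ as the distinguished numerator parameters. The aim is to show that the prefactor ratio turns $(N-u)_d$ into $(N)_d$, and that the new parameters become $(-d,-d,u;2r+2,N)$.

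I expect this bookkeeping step to be the main obstacle: choosing the right form of the transformation and checking that the prefactors collapse correctly. If no standard form matches directly, I will use a fallback. Both sides are polynomials in $u$ of degree at most $d$ with leading behaviour controlled by $(u)_d$. Evaluating at $u=0,-1,\ldots,-d$ truncates both sums, and each truncated identity is a finite Chu--Vandermonde convolution for rising factorials, $(a+b)_k=\sum_j\binom kj(a)_j(b)_{k-j}$. Since there are $d+1$ evaluation points, this determines the polynomial identity.

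\emph{Step 3: deduce \eqref{eq:pmono}.} Let $r\ge1$ and $0\le\alpha\le1$. Then $u=r+1-\alpha\ge r\ge1>0$, so every factor $(u)_q$, $(2r+2)_q$ and $(N+q)_{d-q}$ in \eqref{eq:pnew} is positive. The $q=0$ term equals $(N)_d>0$, which gives $p_{m,r}(n;\alpha)>0$. Each $(u)_q=u(u+1)\cdots(u+q-1)$ is a product of positive factors that increase with $u$, hence it is nonincreasing in $\alpha$. Since all other coefficients are positive and independent of $\alpha$, we get $p_{m,r}(n;b)\le p_{m,r}(n;a)$ whenever $a\le b$. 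The case $d=0$ is trivial, because then $p_{m,r}\equiv1$.
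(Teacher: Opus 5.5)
Your proposal is correct in outline and takes a different route from the paper. The Step~1 rewritings check out. The identity you need is exactly the two-term transformation of a terminating ${}_3F_2(1)$,
\[
 {}_3F_2\!\left(\begin{matrix}-n,\,a,\,b\\ c,\,e\end{matrix};1\right)
 =\frac{(e-a)_n}{(e)_n}\,
 {}_3F_2\!\left(\begin{matrix}-n,\,a,\,c-b\\ c,\,a-e-n+1\end{matrix};1\right),
\]
applied with $n=d$, $a=u$, $b=2r+2+d$, $c=2r+2$, $e=1-d-N+u$. Then $c-b=-d$ and $a-e-d+1=N$. The prefactor is $(e-a)_d/(e)_d=(1-d-N)_d/(1-d-N+u)_d=(N)_d/(N-u)_d$, which is precisely the collapse you predicted.

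Two corrections are needed.
\begin{enumerate}
\item The series is not Saalsch\"utzian: its denominator parameters exceed its numerator parameters by $1-d-N$, not $1$. So balanced-series identities do not close the step. The transformation above holds for arbitrary terminating series, and that is what you should cite.
\item Your fallback does not reduce to Chu--Vandermonde. At $u=-j$ both sides are still ${}_3F_2$ sums carrying the factor $1/(2r+2)_q$. The truncated identity is the same kind of transformation, now with $-j$ as the terminating parameter, so it does not bypass Step~2.
\end{enumerate}

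The paper instead proves the identity directly. It writes $p_{m,r}/d!$ as the coefficient of $z^d$ in $(1-z)^{-n-\alpha-1}{}_2F_1(m+r+1,r+1-\alpha;2r+2;z)$. Euler's transformation turns this into $(1-z)^{-n-m-1}{}_2F_1(-d,r+1+\alpha;2r+2;z)$. The terminating factor is then written as $\E(1-Uz)^d$ with $U$ Beta-distributed, and $(1-Uz)^d=\bigl((1-z)+(1-U)z\bigr)^d$ is expanded binomially. That computation is in effect a self-contained proof of the transformation above in this instance.

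Your version is shorter once the tabulated identity is accepted. The paper's needs only Euler's formula and a beta integral, and it introduces the same Beta variable that reappears in \eqref{eq:probp}. Your Step~3 is the same monotonicity argument as the paper's.
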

\begin{proof}
Writing $[z^d]$ for coefficient extraction, \eqref{eq:pold} gives
\[
 \frac{p_{m,r}(n;\alpha)}{d!}
 =[z^d](1-z)^{-n-\alpha-1}
 {}_2F_1(m+r+1,r+1-\alpha;2r+2;z).
\]
Euler's transformation \cite[\S15.8]{DLMF} changes the hypergeometric factor into
\[
 (1-z)^{\alpha-m}{}_2F_1(-d,r+1+\alpha;2r+2;z).
\]
For $|\alpha|<r+1$, take $U\sim\operatorname{Beta}(r+1+\alpha,r+1-\alpha)$. The last terminating series is $\E(1-Uz)^d$. Expand
\[
 (1-Uz)^d=\sum_{q=0}^d\binom dq(1-U)^qz^q(1-z)^{d-q}
\]
and use $\E(1-U)^q=(r+1-\alpha)_q/(2r+2)_q$. Extracting the coefficient yields \eqref{eq:pnew}. Polynomial continuation removes the temporary restriction on $\alpha$. In the interval needed for \eqref{eq:pmono}, all summands are nonnegative, the $q=0$ summand is positive, and the only parameter-dependent factor is nonincreasing.
\end{proof}

Define
\begin{equation}\label{eq:Qm}
 Q_m(t)=D_m(t)\bigl(D_m(t)-H_m(t)\bigr).
\end{equation}
For $r\ge1$ and $0\le\alpha\le1$, put
\begin{equation}\label{eq:Sr}
 S_{m,r}(\alpha)=\sum_{n=0}^{\infty}
 \frac{p_{m,r}(n;\alpha)^2}{Q_m(n+\alpha)}.
\end{equation}
The summand is $O(n^{-2r-1})$, and the series converges. Each coefficient of $D_m-H_m$, except the cancelled leading coefficient, is strictly positive. Thus Lemma~\ref{lem:pnew} implies
\begin{equation}\label{eq:Smono}
 S_{m,r}(b)\le S_{m,r}(a)\qquad(0\le a\le b\le1).
\end{equation}
For $0<\alpha<1$, set
\begin{equation}\label{eq:theta}
 T_{m,r}(\alpha)=|\lambda_{m,r}(\alpha)|S_{m,r}(\alpha),\qquad
 \tht=\sum_{\substack{1\le r<m\\r\ \odd}}T_{m,r}(\alpha).
\end{equation}
At $\alpha=0,1$, define $\Theta_{\alpha,m}=0$. These endpoints have no negative coefficients in \eqref{eq:defect}.

\begin{lemma}\label{lem:scalarcriterion}
If $0\le\alpha\le1$ and $\tht\le c$, then
\begin{equation}\label{eq:scalarsuff}
 I-B_{\alpha,m}^*B_{\alpha,m}\ge(1-c)R_{\alpha,m}.
\end{equation}
Moreover,
\begin{equation}\label{eq:halftransfer}
 \Theta_{\alpha,m}\le\frac{28}{27}\Theta_{1/2,m}
 \qquad(1/2\le\alpha\le1).
\end{equation}
\end{lemma}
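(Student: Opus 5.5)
The plan is to treat the two assertions separately. The first follows from a sign analysis of the coefficients in \eqref{eq:defect} combined with a weighted Cauchy--Schwarz inequality. The second follows from the monotonicity \eqref{eq:Smono} together with an elementary one-variable bound on $|\lambda_{m,r}|$.

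\emph{Signs of the coefficients.} For $0\le\alpha\le1$ and $1\le j\le r$ we have $\alpha^2-j^2\le0$. By \eqref{eq:vlambda}, $\lambda_{m,r}(\alpha)$ therefore has the sign of $(-1)^r$, or vanishes. Also $\lambda_{m,0}(\alpha)=m^2\alpha\ge0$. In \eqref{eq:defect} I would discard all terms with $r$ even, since they are positive semidefinite. This leaves
\[
 I-B^*B\ge R_{\alpha,m}-\sum_{r\ \odd}|\lambda_{m,r}(\alpha)|\,v_{m,r}v_{m,r}^*.
\]
At $\alpha\in\{0,1\}$ every $\lambda_{m,r}$ with $r\ge1$ vanishes, and the claim is immediate.

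\emph{Cauchy--Schwarz step.} For each odd $r$ and each $x\in\HH$ I would use the weighted Cauchy--Schwarz inequality
\[
 |\langle x,v_{m,r}\rangle|^2\le\Bigl(\sum_n \frac{v_{m,r}(n)^2}{r_n(\alpha)}\Bigr)\sum_n r_n(\alpha)|x_n|^2 .
\]
The weight simplifies because $r_n(\alpha)=(D_m-H_m)/D_m$ evaluated at $n+\alpha$. Hence
\[
 \frac{v_{m,r}(n)^2}{r_n(\alpha)}=\frac{p_{m,r}(n;\alpha)^2}{D_m(n+\alpha)^2}\cdot\frac{D_m(n+\alpha)}{D_m(n+\alpha)-H_m(n+\alpha)}=\frac{p_{m,r}(n;\alpha)^2}{Q_m(n+\alpha)},
\]
so the first factor is exactly $S_{m,r}(\alpha)$. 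This gives $v_{m,r}v_{m,r}^*\le S_{m,r}(\alpha)R_{\alpha,m}$. Summing over odd $r$ yields $I-B^*B\ge(1-\Theta_{\alpha,m})R_{\alpha,m}\ge(1-c)R_{\alpha,m}$, which is \eqref{eq:scalarsuff}.

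\emph{Transfer to $\alpha=1/2$.} For $1/2\le\alpha\le1$, the monotonicity \eqref{eq:Smono} with $a=1/2$, $b=\alpha$ gives $S_{m,r}(\alpha)\le S_{m,r}(1/2)$. It then suffices to show $|\lambda_{m,r}(\alpha)|\le\frac{28}{27}|\lambda_{m,r}(1/2)|$ for each odd $r$. The ratio of these two quantities factors as
\[
 \frac{\alpha(1-\alpha^2)}{3/8}\prod_{j=2}^r\frac{j^2-\alpha^2}{j^2-1/4}.
\]
Each factor with $j\ge2$ is at most $1$ because $\alpha\ge1/2$. The first factor is maximized at $\alpha=1/\sqrt3$, with value $\frac{2}{3\sqrt3}\big/\frac38=\frac{16}{9\sqrt3}$. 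Finally $\frac{16}{9\sqrt3}\le\frac{28}{27}$ is equivalent to $432\le252\sqrt3$, which holds since $252\sqrt3\approx436.5$. Summing over odd $r$ gives \eqref{eq:halftransfer}; at $\alpha=1$ the left side is $0$ by definition.

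No step looks seriously hard. The main point needing care is the cancellation of $D_m$ that turns $v_{m,r}^2/r_n$ into $p_{m,r}^2/Q_m$, which is what makes $S_{m,r}$ the right constant in the Cauchy--Schwarz step.
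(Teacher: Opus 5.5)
Your proposal is correct and follows the paper's proof essentially step for step: you discard the nonnegative even-$r$ terms (including $r=0$, whose weighted series would diverge), bound each odd-$r$ rank-one term by $S_{m,r}(\alpha)R_{\alpha,m}$ via weighted Cauchy--Schwarz, and obtain the transfer by combining \eqref{eq:Smono} with the factorwise bound $|\lambda_{m,r}(\alpha)|/|\lambda_{m,r}(1/2)|\le 16/(9\sqrt3)<28/27$. Your explicit verification that $v_{m,r}(n)^2/r_n(\alpha)=p_{m,r}(n;\alpha)^2/Q_m(n+\alpha)$ fills in a detail the paper leaves implicit.
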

\begin{proof}
For $r\ge1$, weighted Cauchy--Schwarz gives
\[
 v_{m,r}v_{m,r}^*\le S_{m,r}(\alpha)R_{\alpha,m}.
\]
On $0<\alpha<1$ the negative coefficients are exactly the odd $r$ terms. Discarding all the nonnegative terms in \eqref{eq:defect} proves \eqref{eq:scalarsuff}. The $r=0$ weighted series diverges and is never used in this argument.

For $1/2\le\alpha<1$ and $r\ge1$,
\[
 \frac{|\lambda_{m,r}(\alpha)|}{|\lambda_{m,r}(1/2)|}
 =\frac{\alpha(1-\alpha^2)}{3/8}
 \prod_{j=2}^r\frac{j^2-\alpha^2}{j^2-1/4}
 \le\frac{16}{9\sqrt3}<\frac{28}{27}.
\]
Combine this with \eqref{eq:Smono} and sum over odd $r$. The endpoint $\alpha=1$ is immediate.
\end{proof}

It therefore suffices to prove, for every $m$,
\begin{equation}\label{eq:halftarget}
 \Theta_{\alpha,m}<\frac{15}{16}\qquad(0\le\alpha\le1/2).
\end{equation}
Indeed, $(28/27)(15/16)=35/36$.

\section{Two finite-order scalar estimates}\label{sec:scalar}
The bounds in this section control the actual series \eqref{eq:Sr}; no limit in $m$ is used. Unless stated otherwise, assume
\[
 0<\alpha\le\frac12,\qquad1\le r\le m-2.
\]
Use the notation
\begin{align}
 d&=m-r-1, &k&=2r+1, &N_0&=2m-r-1,\notag\\
 A&=\frac{(m+r)!}{d!}, &s&=r-\alpha,
 &\sigma(\alpha)&=\frac{\sin(\pi\alpha)}{\pi},\label{eq:scalarvars}\\
 E&=d^2+N_0s=m^2-(1+\alpha)N_0,
 &z&=\frac{d^2}{E}, &F&=\frac{A^2}{E^k}.\notag
\end{align}
Also put
\begin{equation}\label{eq:deltaC}
 \delta=1-\frac{kz}{k+1},\qquad
 C=1+\frac{d^2(1+2\alpha)}{2rm^2s}+\frac{kN_0}{2rm^2}.
\end{equation}
All denominators in these formulas are positive.

\subsection{A probability identity and comparison of the full series}
Take independent random variables $U$ and $Q_n$, where
\[
 U\sim\operatorname{Beta}(r+1+\alpha,r+1-\alpha),\qquad
 \mathbb P(Q_n=q)=\frac{\binom dq\binom{n+m}{d-q}}{\binom{n+m+d}{d}}.
\]
The latter is a hypergeometric distribution, with population size $N=n+m+d$, $d$ marked objects, and sample size $d$. Formula \eqref{eq:pnew}, together with
\[
 \binom dq^2q!(n+r+2+q)_{d-q}
 =d!\binom dq\binom{n+m}{d-q},
\]
gives the exact finite identity
\begin{equation}\label{eq:probp}
 p_{m,r}(n;\alpha)=(n+m+1)_d\,\E(1-U)^{Q_n}.
\end{equation}
For $0\le u\le1$, the elementary symmetric-mean inequality implies
\begin{equation}\label{eq:hypergeom}
 \E(1-u)^{Q_n}\le\left(1-\frac{du}{N}\right)^d
 \le e^{-d^2u/N}.
\end{equation}
For completeness, the left side is the $d$th elementary symmetric mean of $N$ numbers, $d$ of which are $1-u$ and the others $1$. With their sum fixed, replacing two of these numbers by their average cannot decrease the elementary symmetric sum: only the term containing their product changes, and its coefficient is nonnegative. Maximization on the compact simplex, followed if necessary by minimizing variance among maximizers, gives the equal-vector bound in \eqref{eq:hypergeom}.

Let
\[
 \phi(u)=\E e^{-uU}={}_1F_1(r+1+\alpha;2r+2;-u),
 \qquad
 c_r(\alpha)=\frac{k\alpha\prod_{j=1}^r(j^2-\alpha^2)}{(k!)^2}.
\]
The beta integral gives the displayed expression for $\phi$ \cite[\S13.4]{DLMF}. Since $|\lambda_{m,r}|=c_r A^2$, \eqref{eq:probp}--\eqref{eq:hypergeom} yield
\[
 v_{m,r}(n;\alpha)
 =\frac{D_m(n)}{D_m(n+\alpha)}
   \frac{\E(1-U)^{Q_n}}{(N+1)_{r+1}}
 \le\frac{\phi(d^2/N)}{(N+1)_{r+1}}.
\]
On the other hand,
\[
 \frac{D_m(n+\alpha)}{H_m(n+\alpha)}
 =\prod_{j=1}^m\left(1+\frac{m}{n+\alpha+j}\right)
 \ge1+\frac{m^2}{n+\alpha+m},
\]
so, using $d\ge1\ge\alpha$,
\begin{equation}\label{eq:diagonalbound}
 \frac1{r_n(\alpha)}\le1+\frac{n+\alpha+m}{m^2}\le1+\frac N{m^2}.
\end{equation}
For every $x\in[N,N+1]$, $x^{r+1}\le(N+1)_{r+1}$ and $\phi(d^2/x)\ge\phi(d^2/N)$. Therefore
\[
 \frac{v_{m,r}(n;\alpha)^2}{r_n(\alpha)}
 \le x^{-2r-2}\phi(d^2/x)^2(1+x/m^2).
\]
Integrate on the unit interval, sum all such intervals, and substitute $u=d^2/x$. This proves
\begin{equation}\label{eq:finiteintegral}
 T_{m,r}(\alpha)\le
 c_r(\alpha)\frac{A^2}{d^{2k}}
 \int_0^{d^2/N_0}u^{k-1}\phi(u)^2
 \left(1+\frac{d^2}{m^2u}\right)\dd u.
\end{equation}
All terms are nonnegative; the comparison includes the entire infinite tail.

\subsection{The Jensen bound}
\begin{lemma}\label{lem:Jensen}
Under the hypotheses of this section,
\begin{equation}\label{eq:Jensen}
 T_{m,r}(\alpha)\le
 \frac{\sigma(\alpha)}s\left(1+\frac1s\right)^{2\alpha}
 F C\delta^{2\alpha}.
\end{equation}
\end{lemma}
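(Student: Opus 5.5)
The plan is to evaluate the right-hand side of \eqref{eq:finiteintegral} almost in closed form, and to postpone every inequality to one final application of Jensen's inequality. That single step is where the factor $\delta^{2\alpha}$ should come from.

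\emph{Step 1: the prefactor.} Rewrite the prefactor with the reflection formula. Since $\alpha\prod_{j=1}^r(j^2-\alpha^2)=\sigma(\alpha)\Gamma(r+1+\alpha)\Gamma(r+1-\alpha)$ and $(2r+1)!=k!$, we get
\[
 c_r(\alpha)=\frac{\sigma(\alpha)}{(k-1)!}\,\Beta(r+1+\alpha,r+1-\alpha).
\]
This isolates $\sigma(\alpha)$, as required by \eqref{eq:Jensen}. The remaining beta factor is exactly the normalization of the law of $U$.

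\emph{Step 2: the squared Laplace transform.} Write $\phi(u)^2=\E e^{-uV}$ with $V=U+U'$, where $U'$ is an independent copy of $U$. Then
\[
 \int_0^{X}u^{k-1}\phi(u)^2\,\dd u=\E\int_0^X u^{k-1}e^{-uV}\dd u,
 \qquad X=d^2/N_0,
\]
and similarly for the term with $u^{k-2}$ coming from the factor $1+d^2/(m^2u)$. I would not extend these integrals to $\infty$, since that loses the dependence on $N_0$ and with it the factor $E^{-k}$. Instead I substitute $u=Xy$, giving $X^k\int_0^1y^{k-1}e^{-XVy}\dd y$. This produces $A^2/d^{2k}\cdot d^{2k}/N_0^k$. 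I then use the elementary bound
\[
 \int_0^1 y^{k-1}e^{-ty}\dd y\le\frac1k\Bigl(1+\frac{t}{k+1}\Bigr)^{-k},
\]
which is Jensen for the convex map $y\mapsto e^{-ty}$ under the density $ky^{k-1}$ on $[0,1]$. This turns the expression into
\[
 \E\Bigl(N_0+\frac{d^2 V}{k+1}\Bigr)^{-k}
\]
up to explicit factors. Note that $E=d^2+N_0s$ has exactly this shape, with $s=r-\alpha$ playing the role of a shifted mean.

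\emph{Step 3: the remaining expectation over $V$.} This is the step I expect to be the main obstacle. The expectation $\E(\cdot)^{-k}$ over the product beta law must be compared with the value at a deterministic point. The factors $(1+1/s)^{2\alpha}$ and $\delta^{2\alpha}=(1-kz/(k+1))^{2\alpha}$ suggest doing this one beta variable at a time. I would integrate the beta density of $U$ against $(a+bU)^{-k}$ using the Euler integral representation of ${}_2F_1$. Because $r+1+\alpha+(r+1-\alpha)=2r+2=k+1$, this integral should collapse to an expression of the form
\[
 \frac{\Gamma(r+1-\alpha)\Gamma(r+1+\alpha)}{\Gamma(k+1)}\,a^{-k}\Bigl(1+\frac ba\Bigr)^{-(r+1+\alpha)}\cdots.
\]
The excess exponent $2\alpha$ beyond the symmetric case would then be controlled by the concavity of $x\mapsto x^{2\alpha}$ for $2\alpha\le1$; that is the Jensen step named in the lemma. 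The bookkeeping of $E=m^2-(1+\alpha)N_0$ and $z=d^2/E$ should yield $\delta^{2\alpha}$ together with the factor $(1+1/s)^{2\alpha}$, which comes from replacing $\Gamma$ ratios by $s^{\pm2\alpha}$ via Gautschi-type bounds.

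\emph{Step 4: the correction term.} The factor $C$ collects the contribution of the $d^2/(m^2u)$ term. That term lowers the power of $u$ by one, and after the same manipulations it contributes the pieces
\[
 \frac{d^2(1+2\alpha)}{2rm^2s}\qquad\text{and}\qquad\frac{kN_0}{2rm^2}.
\]
These are to be checked by bounding the ratio of the two resulting integrals termwise. I would verify each inequality first at $\alpha\to0^+$, where both sides should agree to leading order, as a consistency check.
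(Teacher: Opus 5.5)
There is a genuine gap: Step~2 loses too much, and no later step can recover it.

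\textbf{The Step~2 inequality.} The bound $\int_0^1y^{k-1}e^{-ty}\dd y\le\frac1k(1+\frac t{k+1})^{-k}$ is not Jensen. For the convex map $y\mapsto e^{-ty}$, Jensen gives the reverse inequality $\int_0^1ky^{k-1}e^{-ty}\dd y\ge e^{-tk/(k+1)}$. Your inequality is in fact true, as a convex-order comparison of $\operatorname{Beta}(k,1)$ with a gamma law of the same mean, but that is a different argument.

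\textbf{Why it is fatal.} As $t\to\infty$ the left side is $\sim(k-1)!\,t^{-k}$ and the right side is $\sim(k+1)^kk^{-1}t^{-k}$. You therefore lose a factor $(k+1)^k/k!$. This is exactly the relevant regime, because $t=XV$ with $X=d^2/N_0\approx m/2$.

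Concretely, $c_r(\alpha)=\sigma(\alpha)\Beta(b,c)/(k-1)!$ with $b=r+1+\alpha$, $c=r+1-\alpha$. So your main term after Step~2 equals $\sigma(\alpha)\Beta(b,c)A^2\,\E(N_0+d^2V/(k+1))^{-k}/k!$.
\begin{itemize}
\item For $r=1$ and $m\to\infty$, dominated convergence shows this tends to $\sigma(\alpha)\Beta(b,c)\frac{(k+1)^k}{k!}\E V^{-3}\ge\sigma(\alpha)\Beta(b,c)\frac{64}{6}(\E V)^{-3}$.
\item Its ratio to $\sigma(\alpha)$ therefore tends to at least $\frac{16}{9}$ as $\alpha\to0^+$, since $\Beta(b,c)\to\frac16$ and $\E V\to1$.
\item In the same double limit, the right side of \eqref{eq:Jensen} divided by $\sigma(\alpha)$ tends to $\frac32$, because $F\to1$, $s\to1$, $(1+1/s)^{2\alpha}\delta^{2\alpha}\to1$ and $C\to\frac32$.
\end{itemize}
Steps~3 and~4 can only enlarge an upper bound, so \eqref{eq:Jensen} is out of reach along this route.

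Step~3 is also unsound as written. The Euler-integral collapse $\int_0^1x^{b-1}(1-x)^{c-1}(a+\beta x)^{-(b+c)}\dd x=\Beta(b,c)a^{-c}(a+\beta)^{-b}$ needs the exponent $b+c=k+1$, not $k$. In any case $V=U+U'$ gives a double integral.

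\textbf{The missing idea.} Majorize $\phi$ pointwise before integrating. The beta integral, $(1-x)^{s}\le e^{-sx}$, and extension to $(0,\infty)$ give $\phi(u)\le\frac{\Gamma(k+1)}{\Gamma(c)}(u+s)^{-b}$.
\begin{itemize}
\item Since $2b=k+1+2\alpha$, the substitution $t=u/(u+s)$ turns $u^{k-1}(u+s)^{-2b}\dd u$ exactly into $s^{-1-2\alpha}t^{k-1}(1-t)^{2\alpha}\dd t$.
\item It sends your (correctly retained) finite endpoint $d^2/N_0$ to $z=d^2/E$. This is where $E=d^2+N_0s$ and $F=A^2/E^k$ come from; the $s$ in $E$ originates in $e^{-sx}$, not in a mean of $V$.
\item Jensen is then applied to the concave map $x\mapsto x^{2\alpha}$: $\Beta_z(k,1+2\alpha)=\frac{z^k}{k}\E(1-zW)^{2\alpha}\le\frac{z^k}{k}\delta^{2\alpha}$ with $W\sim\operatorname{Beta}(k,1)$.
\item Reflection turns the prefactor into $k\sigma(\alpha)\Gamma(b)/\Gamma(c)$. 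For a gamma variable $G$ of shape $c$, $\Gamma(b)/\Gamma(c)=\E G^{2\alpha}\le(s+1)^{2\alpha}$; together with $s^{-1-2\alpha}$ this gives $s^{-1}(1+1/s)^{2\alpha}$.
\item Finally, $C$ comes from integrating by parts to express $\Beta_z(k-1,2+2\alpha)$ through $\Beta_z(k,1+2\alpha)$ and $z^{k-1}(1-z)^{1+2\alpha}$, using $(1-z)^{2\alpha}\le\delta^{2\alpha}$.
\end{itemize}
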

\begin{proof}
Put $\eta=2\alpha\in(0,1]$, $b=r+1+\alpha$ and $c=r+1-\alpha=s+1$. The beta integral and $(1-x)^s\le e^{-sx}$ give
\[
 \phi(u)\le\frac{\Gamma(k+1)}{\Gamma(c)}(u+s)^{-b}.
\]
The reflection formula \cite[\S5.5]{DLMF} gives
\[
 c_r(\alpha)\left(\frac{\Gamma(k+1)}{\Gamma(c)}\right)^2
 =k\sigma(\alpha)\frac{\Gamma(b)}{\Gamma(c)}.
\]
Substitute into \eqref{eq:finiteintegral} and then set $t=u/(u+s)$:
\begin{align}\label{eq:incomplete}
 T_{m,r}(\alpha)\le{}&
 k\sigma(\alpha)\frac{\Gamma(b)}{\Gamma(c)}\frac{A^2}{d^{2k}}
 \left[s^{-1-\eta}\Beta_z(k,1+\eta)
 +\frac{d^2}{m^2}s^{-2-\eta}\Beta_z(k-1,2+\eta)\right],
\end{align}
where $\Beta_z(p,q)=\int_0^z t^{p-1}(1-t)^{q-1}\dd t$.

If $V\sim\operatorname{Beta}(k,1)$, concavity gives
\[
 \Beta_z(k,1+\eta)=\frac{z^k}{k}\E(1-zV)^\eta
 \le\frac{z^k}{k}\delta^\eta.
\]
Integration by parts also gives
\[
 \Beta_z(k-1,2+\eta)
 =\frac{(1+\eta)\Beta_z(k,1+\eta)
       +z^{k-1}(1-z)^{1+\eta}}{k-1}.
\]
Use $(1-z)^\eta\le\delta^\eta$ and
\[
 \frac{kd^2(1-z)}{m^2s(k-1)z}=\frac{kN_0}{m^2(k-1)}.
\]
Then \eqref{eq:incomplete} is bounded by
$\sigma(\alpha)\Gamma(b)\Gamma(c)^{-1}s^{-1-\eta}FC\delta^\eta$.
Finally, for a gamma random variable $G$ with shape $c$ and scale $1$,
\[
 \frac{\Gamma(b)}{\Gamma(c)}=\E G^\eta\le(\E G)^\eta=(s+1)^\eta.
\]
This proves \eqref{eq:Jensen}. Both concavity steps have explicitly used $2\alpha\le1$.
\end{proof}

\subsection{A second bound with a summable continuous majorant}
Set
\begin{equation}\label{eq:XiU}
 \Xi_{m,r}=\left[\prod_{j=1}^{2r+1}\left(1+\frac j{m-r-1}\right)\right]^2,
\end{equation}
\begin{equation}\label{eq:Ur}
 \mathcal U_r(\alpha)=\frac{\sigma(\alpha)}{2^{2\alpha}}
 \Beta(r+1/2-\alpha,2\alpha+1)
 \left(1+\frac{2\alpha+1}{2r(r+1)}\right).
\end{equation}
\begin{lemma}\label{lem:continuous}
For $0<\alpha<1$ and $1\le r\le m-2$,
\begin{equation}\label{eq:TU}
 T_{m,r}(\alpha)\le\Xi_{m,r}\mathcal U_r(\alpha).
\end{equation}
Furthermore,
\begin{equation}\label{eq:Usum}
 \sum_{\substack{r\ge1\\r\ \odd}}\mathcal U_r(\alpha)\le\frac5{16}.
\end{equation}
\end{lemma}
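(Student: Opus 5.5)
We start from the finite-integral comparison \eqref{eq:finiteintegral}. The goal there is to trade the discrete, $m$-dependent quantities for a continuous majorant in which $m$ appears only through the prefactor $\Xi_{m,r}$.

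\textbf{Removing $m$ from the integral.} First, extend the upper limit $d^2/N_0$ of the integral to $\infty$. This is legitimate because the integrand is nonnegative, and the resulting integral converges: $\phi(u)^2$ decays like $u^{-2(r+1+\alpha)}$, while the weight is $u^{k-1}(1+d^2/(m^2u))$. Second, use $d\le m$ to bound the correction factor $1+d^2/(m^2u)$ by $1+1/u$.

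\textbf{The prefactor.} We have $A^2/d^{2k}=\bigl[\prod_{j=1}^{2r+1}(d+j)/d\bigr]^2=\Xi_{m,r}$, because $(m+r)!/d!=\prod_{j=1}^{k}(d+j)$ with $d+k=m+r$. This produces exactly the factor $\Xi_{m,r}$. What remains is to show
\[
 c_r(\alpha)\int_0^\infty u^{k-1}\phi(u)^2\Bigl(1+\frac1u\Bigr)\dd u\le\mathcal U_r(\alpha).
\]

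\textbf{Evaluating the integral.} Write $\phi(u)=\E e^{-uU}$ with $U,U'$ independent copies. Then
\[
 \int_0^\infty u^{p-1}\phi(u)^2\dd u=\Gamma(p)\,\E (U+U')^{-p}.
\]
Take $p=k$ and $p=k-1$. By the reflection identity already used in Lemma~\ref{lem:Jensen}, $c_r(\alpha)\Gamma(k)$ equals $\sigma(\alpha)$ times a ratio of gamma functions. So it suffices to bound $\E(U+U')^{-k}$ by a beta function. The shape of $\mathcal U_r$, namely $2^{-2\alpha}\Beta(r+1/2-\alpha,2\alpha+1)$, suggests passing to the sum $U+U'$ and using a duplication-formula computation. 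Concretely, I would write
\[
 \E(U+U')^{-k}=\frac{1}{\Beta(b,c)^2}\iint x^{b-1}(1-x)^{c-1}y^{b-1}(1-y)^{c-1}(x+y)^{-k}\dd x\dd y,
\]
with $b=r+1+\alpha$ and $c=r+1-\alpha$. Then I would change variables to $t=x+y$ and ratio $w=x/t$, and bound $(1-x)(1-y)\le(1-t/2)^2$ by AM--GM. This reduces the estimate to a one-dimensional integral of the form $\int_0^2 t^{2b-1-k}(1-t/2)^{2c-2}\dd t$. With $2b-1-k=2\alpha$ this gives $2^{2\alpha+1}\Beta(2\alpha+1,2c-1)$ times the $w$-integral $\Beta(b,b)$. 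The gamma and duplication bookkeeping should then collapse to the displayed $\sigma(\alpha)2^{-2\alpha}\Beta(r+1/2-\alpha,2\alpha+1)$, up to a factor $\le1$ that we verify.

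\textbf{The lower-order term.} The $1/u$ term corresponds to $p=k-1$. It is bounded relative to the main term using $\E(U+U')^{-(k-1)}/\E(U+U')^{-k}$ together with the ratio $\Gamma(k-1)/\Gamma(k)$. This yields the correction factor $1+(2\alpha+1)/(2r(r+1))$. This gamma bookkeeping is the step I expect to be the main obstacle: the constants must come out exactly in the stated form, and I may need to use the slack from $d^2/m^2<1$ or from truncating at $d^2/N_0$.

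\textbf{The sum \eqref{eq:Usum}.} Use $\sigma(\alpha)\le\alpha$, and note that $\Beta(r+1/2-\alpha,2\alpha+1)\le\Gamma(2\alpha+1)(r-1/2)^{-2\alpha-1}$ decays at least like $r^{-1-2\alpha}$. For $\alpha$ bounded away from $0$ the series over odd $r$ converges uniformly. As $\alpha\to0$ the factor $\sigma(\alpha)$ kills the slowly converging $\sum r^{-1-2\alpha}\sim 1/(4\alpha)$ over odd $r$, leaving a limit of about $1/4$. I would therefore bound the tail $r\ge R$ by an integral comparison, of the form $\frac{\alpha}{2}\cdot\frac{(R-1)^{-2\alpha}}{2\alpha}$ with explicit constants. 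The finitely many initial terms are then bounded as explicit functions of $\alpha\in(0,1)$, monotone or checked on subintervals, to reach $5/16$.
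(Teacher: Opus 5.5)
\textbf{Part \eqref{eq:TU}.} Your argument for \eqref{eq:TU} is the paper's. You enlarge the integral in \eqref{eq:finiteintegral} to $(0,\infty)$, use $d\le m$, and recognize $A^2/d^{2k}=\Xi_{m,r}$. You then write the two pieces as $(2r)!\,\E(X+Y)^{-2r-1}$ and $(2r-1)!\,\E(X+Y)^{-2r}$, and bound each by the polar change of variables, using $(1-x)(1-y)\le(1-s/2)^2$ raised to the power $c-1=r-\alpha\ge0$.

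Three clarifications:
\begin{enumerate}
\item The bookkeeping closes exactly, with no leftover factor, via $c_r(\alpha)(2r)!=\sigma(\alpha)\Beta(b,c)$ and duplication.
\item For the $1/u$ piece, do not try to control the ratio of the true moments. Apply the same polar bound with exponent $2r$ and divide the two beta bounds:
\[
 \frac{2}{2r}\cdot\frac{\Beta(2+2\alpha,2r+1-2\alpha)}{\Beta(1+2\alpha,2r+1-2\alpha)}=\frac{2\alpha+1}{2r(r+1)}.
\]
No slack from $d^2/m^2$ or from the cutoff $d^2/N_0$ is needed.
\item You should say why \eqref{eq:finiteintegral}, derived in a section assuming $\alpha\le1/2$, holds on $(0,1)$. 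Its derivation uses only $0\le\alpha\le1\le d$.
\end{enumerate}

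\textbf{Part \eqref{eq:Usum}: the failing step.} The gap is here, and it starts with your first move. Replacing $\sigma(\alpha)$ by $\alpha$ throws away the vanishing of $\sigma$ at $\alpha=1$, and the resulting majorant is then too large near that endpoint. Its $r=1$ term alone satisfies
\[
 \alpha\,2^{-2\alpha}\Beta(3/2-\alpha,2\alpha+1)\Bigl(1+\frac{2\alpha+1}{4}\Bigr)\longrightarrow\frac14\cdot\frac{16}{15}\cdot\frac74=\frac7{15}>\frac5{16}\qquad(\alpha\to1).
\]
So no head/tail bookkeeping after this step can reach $5/16$ on all of $(0,1)$. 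Independently, you leave the head to unspecified checks (``monotone or on subintervals'') over a continuum of $\alpha$, which is not yet a proof.

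\textbf{The missing idea.} The paper avoids both problems by summing over $r$ before estimating. By Tonelli and $1+t\ge2\sqrt t$,
\[
 \sum_{r\ \odd}\Beta(r+1/2-\alpha,2\alpha+1)=\int_0^1\frac{t^{1/2-\alpha}(1-t)^{2\alpha-1}}{1+t}\dd t\le\frac12\Beta(1-\alpha,2\alpha).
\]
The correction factor in $\mathcal U_r$ is at most its $r=1$ value $(2\alpha+5)/4$. Keep $\sigma(\alpha)$ exactly. The reflection identity $\sigma(\alpha)\Gamma(1-\alpha)=1/\Gamma(\alpha)$ and duplication of $\Gamma(2\alpha)$ then turn the bound into
\[
 \frac{(2\alpha+5)\,\Gamma(\alpha+1/2)}{16\sqrt\pi\,\Gamma(\alpha+1)}.
\]
The ratio $\Gamma(\alpha+1/2)/\Gamma(\alpha+1)$ is log-convex on $[0,1]$, by H\"older on its beta integral, with endpoint values $\sqrt\pi$ and $\sqrt\pi/2$. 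Hence it is at most $\sqrt\pi\,2^{-\alpha}$. Finally $2^\alpha\ge1+\alpha\log2\ge1+2\alpha/5$ gives $5/16$ uniformly in $\alpha$. This covers both the regime $\alpha\to0$, where the tail carries all the mass, and $\alpha\to1$, where your majorant breaks down.
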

\begin{proof}
The proof of \eqref{eq:finiteintegral} also works for $0<\alpha<1$. Enlarge the integral to $(0,\infty)$ and use $d^2/m^2\le1$. The resulting integral is
\[
 \mathcal J_r^+(\alpha)=c_r(\alpha)
 \int_0^\infty u^{2r}\phi(u)^2(1+1/u)\dd u,
\]
multiplied by $\Xi_{m,r}$. For independent $X,Y\sim\operatorname{Beta}(b,c)$,
\[
 \mathcal J_r^+=c_r(\alpha)
 \left[(2r)!\E(X+Y)^{-2r-1}+(2r-1)!\E(X+Y)^{-2r}\right].
\]
In the double beta integral, set $s=x+y$ and $t=x/(x+y)$. Since $c-1=r-\alpha\ge0$ and
$(1-x)(1-y)\le(1-s/2)^2$, enlargement of the $t$ interval to $[0,1]$ gives, for $j=2r+1,2r$,
\[
 \E(X+Y)^{-j}\le
 \frac{\Beta(b,b)2^{2b-j}\Beta(2b-j,2c-1)}{\Beta(b,c)^2}.
\]
Use $c_r(\alpha)(2r)!=\sigma(\alpha)\Beta(b,c)$ and the gamma duplication formula. The first term becomes
$\sigma(\alpha)2^{-2\alpha}\Beta(r+1/2-\alpha,2\alpha+1)$;
the ratio of the second bound to the first is $(2\alpha+1)/(2r(r+1))$. This proves \eqref{eq:TU}.

For \eqref{eq:Usum}, Tonelli's theorem gives
\begin{align*}
 \sum_{r\ \odd}\Beta(r+1/2-\alpha,2\alpha+1)
 &=\int_0^1\frac{t^{1/2-\alpha}(1-t)^{2\alpha-1}}{1+t}\dd t\\
 &\le\frac12\Beta(1-\alpha,2\alpha),
\end{align*}
using $1+t\ge2\sqrt t$. The correction factor in \eqref{eq:Ur} is at most $(2\alpha+5)/4$. Reflection and duplication reduce the resulting bound to
\[
 \frac{(2\alpha+5)\Gamma(\alpha+1/2)}{16\sqrt\pi\Gamma(\alpha+1)}.
\]
The beta integral and H\"older's inequality show that
$\Gamma(\alpha+1/2)/\Gamma(\alpha+1)$ is log-convex on $[0,1]$.
Its endpoint values are $\sqrt\pi$ and $\sqrt\pi/2$, so the ratio is at most $\sqrt\pi\,2^{-\alpha}$. Finally,
$2^\alpha\ge1+\alpha\log2\ge1+2\alpha/5$, which proves \eqref{eq:Usum}.
\end{proof}

\begin{lemma}[The terminal direction]\label{lem:last}
For $m\ge2$ and $0<\alpha<1$,
\begin{equation}\label{eq:last}
 T_{m,m-1}(\alpha)\le\frac{12\alpha}{m4^m}\le\frac{12}{m4^m}.
\end{equation}
\end{lemma}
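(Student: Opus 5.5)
The plan is to compute $T_{m,m-1}(\alpha)$ directly, since the case $r=m-1$ is degenerate. Here $d=m-r-1=0$, so by \eqref{eq:pold} (or \eqref{eq:pnew}) the polynomial $p_{m,m-1}(n;\alpha)$ is monic of degree $0$, i.e.\ identically $1$. Hence
\[
 S_{m,m-1}(\alpha)=\sum_{n\ge0}\frac1{Q_m(n+\alpha)}.
\]
By \eqref{eq:vlambda} with $r=m-1$ the binomial coefficient is $\binom{2m-1}{2m-1}=1$, so
\[
 |\lambda_{m,m-1}(\alpha)|=(2m-1)\,\alpha\prod_{j=1}^{m-1}(j^2-\alpha^2)\le(2m-1)\,\alpha\,((m-1)!)^2 .
\]
This already produces the factor $\alpha$. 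It remains to bound the single series $\sum_n 1/Q_m(n+\alpha)$ by something like $\bigl(m!/(2m)!\bigr)^2$ times a modest factor.

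For the series, I will lower-bound $D_m(t)-H_m(t)$ by telescoping. Replace the factors $t+j$ of $H_m$ by the factors $t+m+j$ of $D_m$ one at a time. Each of the $m$ telescoping terms equals $m$ times a product of $m-1$ positive factors, and each such factor is at least the corresponding factor of $(t+1)_{m-1}$. Keeping only the last term gives the cruder but sufficient bound
\[
 D_m(t)-H_m(t)\ge m\,(t+m+1)_{m-1}.
\]
Also $D_m(t)=(t+m+1)_{m-1}(t+2m)$. Therefore, for $t=n+\alpha\ge n$,
\[
 \frac1{Q_m(t)}\le\frac1{m(t+2m)\,(t+m+1)_{m-1}^2}\le\frac1{2m^2\,(n+m+1)_{m-1}^2}.
\]
The terms decrease fast in $n$: the ratio of consecutive terms is at most $\bigl((m+1)/(2m)\bigr)^{2}\le1$, with quick decay. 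So I would bound the tail by a geometric series, or by comparison with $\int_0^\infty (x+m+1)^{-2(m-1)}\,dx$. This should give
\[
 \sum_{n\ge0}\frac1{(n+m+1)_{m-1}^2}\le \frac{C_0}{(m+1)_{m-1}^2}
\]
with an absolute constant $C_0$, say $C_0\le 2$. The small cases $m=2,3$ may need a direct check here.

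The final step is to combine the two bounds. Using $(m+1)_{m-1}=(2m-1)!/m!$ and $(m-1)!\,m!/(2m-1)!=2/\binom{2m}{m}$, we get
\[
 T_{m,m-1}(\alpha)\le \frac{(2m-1)\,\alpha\,C_0}{2m^2}\cdot\frac{4}{\binom{2m}{m}^2}.
\]
The standard estimate $\binom{2m}{m}\ge 4^m/(2\sqrt m)$ then gives roughly $C\alpha\,16^{-m}$. This is far smaller than the claimed $12\alpha/(m4^m)$, and the second inequality in \eqref{eq:last} is trivial because $\alpha<1$.

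The only place requiring care is making the tail bound on $\sum_n 1/Q_m(n+\alpha)$ uniform in $m$ with an explicit constant, including $m=2$. Because the target bound has so much slack ($4^{-m}$ against $16^{-m}$), even crude choices should suffice: I would bound the tail by an integral, and if needed use the exact formula $Q_2(t)=(t+3)(t+4)(4t+10)$ for $m=2$.
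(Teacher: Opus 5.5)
Your reduction is correct: $p_{m,m-1}\equiv1$ and $|\lambda_{m,m-1}(\alpha)|\le(2m-1)\alpha((m-1)!)^2$. So are the telescoping bound $D_m(t)-H_m(t)\ge m(t+m+1)_{m-1}$ and the final bookkeeping via $(m-1)!\,m!/(2m-1)!=2/\binom{2m}{m}$. The route is close to the paper's, which bounds $D_m(n+\alpha)$ and $1/r_n(\alpha)$ separately rather than bounding $Q_m$ directly.

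The gap is in the step you flag, $\sum_{n\ge0}(n+m+1)_{m-1}^{-2}\le C_0\,(m+1)_{m-1}^{-2}$: neither argument you offer proves it.
\begin{itemize}
\item The ratio of consecutive terms is $\bigl((n+m+1)/(n+2m)\bigr)^2$, which \emph{increases} to $1$ as $n\to\infty$. So $((m+1)/(2m))^2$ is its minimum, not its maximum, and no fixed geometric series dominates terms that decay only like $n^{-2(m-1)}$.
\item The integral comparison with $(x+m+1)^{-2(m-1)}$ replaces $(n+m+1)_{m-1}$ by $(n+m+1)^{m-1}$. Since $(m+1)_{m-1}/(m+1)^{m-1}$ grows exponentially in $m$, this cannot produce an absolute $C_0$.
\item The guess $C_0\le2$ is false at $m=2$, where the ratio equals $9(\pi^2/6-5/4)\approx3.55$.
\end{itemize}

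The missing idea is a factor-by-factor comparison at scale $2m$. This is the paper's inequality $D_m(n+\alpha)\ge D_m(0)(1+n/(2m))^m$ in another form:
\[
 \frac{(n+m+1)_{m-1}}{(m+1)_{m-1}}=\prod_{j=1}^{m-1}\Bigl(1+\frac n{m+j}\Bigr)\ge\Bigl(1+\frac n{2m}\Bigr)^{m-1}.
\]
Hence, for every $m\ge2$,
\[
 \sum_{n\ge0}\frac{(m+1)_{m-1}^2}{(n+m+1)_{m-1}^2}\le1+\int_0^\infty\Bigl(1+\frac x{2m}\Bigr)^{-2(m-1)}\dd x=1+\frac{2m}{2m-3}\le5 .
\]
With $C_0=5$ your final display becomes
\[
 T_{m,m-1}(\alpha)\le\frac{10(2m-1)\alpha}{m^2\binom{2m}{m}^2}\le\frac{40(2m-1)\alpha}{m\,16^m}.
\]
This is at most $12\alpha/(m4^m)$ for all $m\ge2$; at $m=2$ the check reads $120\le192$. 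So your argument closes once this one step is replaced.
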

\begin{proof}
Here $d=0$ and $p_{m,m-1}=1$. One has
\[
 D_m(n+\alpha)\ge D_m(0)\left(1+\frac n{2m}\right)^m,
 \qquad
 |\lambda_{m,m-1}|\le\alpha(2m-1)((m-1)!)^2.
\]
From the first inequality in \eqref{eq:diagonalbound}, valid independently of $d$,
\[
 \frac1{r_n(\alpha)}\le1+\frac2m\left(1+\frac n{2m}\right).
\]
For $p>1$, the sum of $(1+n/(2m))^{-p}$ is at most $1+2m/(p-1)$. The two resulting sums have total coefficient
\[
 1+\frac{2m}{2m-1}+\frac2m\left(1+\frac{2m}{2m-2}\right)<6.
\]
Use $(m-1)!/D_m(0)=1/(m\binom{2m}{m})$ and $\binom{2m}{m}\ge2^m$ to finish. Thus the case $d=0$ never enters formulas with negative powers of $d$.
\end{proof}

\section{An explicit analytic estimate for large orders}\label{sec:large}
\begin{proposition}\label{prop:large}
For every $m\ge32768$ and $0\le\alpha\le1/2$,
\begin{equation}\label{eq:large}
 \Theta_{\alpha,m}<\frac{15}{16}.
\end{equation}
\end{proposition}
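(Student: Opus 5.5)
The plan is to split the odd indices $1\le r<m$ into three ranges. Each range is controlled by a different earlier estimate: Lemma~\ref{lem:continuous} for small $r$, Lemma~\ref{lem:Jensen} for moderate and large $r$ with $r\le m-2$, and Lemma~\ref{lem:last} for $r=m-1$. The endpoint $\alpha=0$ is trivial, since $\Theta_{0,m}=0$, so assume $0<\alpha\le1/2$. Fix cutoffs $r_1\approx m^{1/3}$ and $r_2\approx m^{2/3}$; for $m\ge32768=2^{15}$ these are roughly $32$ and $1024$ or larger. The precise exponents will be tuned to make the constants close below $15/16$.

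\emph{Small $r$ ($r\le r_1$).} Use \eqref{eq:TU}. Since $\log\Xi_{m,r}\le 2\sum_{j\le 2r+1}j/d=(2r+1)(2r+2)/d$ and $d=m-r-1\ge m-r_1-1$, we get $\Xi_{m,r}\le\exp\bigl((2r_1+1)(2r_1+2)/(m-r_1-1)\bigr)$. This is a fixed number slightly above $1$ when $r_1\sim m^{1/3}$. By \eqref{eq:Usum}, the small-$r$ contribution is at most $\frac5{16}\max_{r\le r_1}\Xi_{m,r}$, which is only a little above $5/16$. That leaves roughly $5/8$ of the budget for the remaining ranges.

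\emph{Large $r$ ($r_2<r\le m-2$).} Use \eqref{eq:Jensen} and show that $F=A^2/E^k$ is superpolynomially small. Write $A=\prod_{i=1}^{k}(d+i)$ and pair the $i$th factor with the $(k+1-i)$th. Since $2d+k+1=2m$, each pair is $(m-x)(m+x)=m^2-x^2$ with $x=(k+1)/2-i$. Hence
\[
A^2\le\prod_{x}(m^2-x^2)\cdot(\text{one middle factor } m^2),
\]
and
\[
F\le\Bigl(\frac{m^2}{E}\Bigr)^{k}\exp\Bigl(-\sum_x x^2/m^2\Bigr)\le\exp\Bigl(\frac{3k(1+\alpha)m}{m^2-3m}-\frac{k^3}{12m^2}+O(k/m^2)\Bigr).
\]
For $r>r_2\sim m^{2/3}$ the cubic term dominates. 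The factors $\sigma(\alpha)/s\le1/(\pi(r-1/2))$, $(1+1/s)^{2\alpha}\le 2$ and $C\le 1+O(1/r)$ are harmless, and $\delta\le1$. So this range contributes an explicitly tiny amount, such as $\sum_{r>r_2}e^{-cr^3/m^2}$, which is computable. The terminal index $r=m-1$ contributes at most $12/(m4^m)$ by Lemma~\ref{lem:last}, which is negligible.

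\emph{Middle range ($r_1<r\le r_2$), the main obstacle.} Here $\Xi_{m,r}$ is already large and $F$ is not yet small. The saving must come from $\sigma(\alpha)\delta^{2\alpha}$. Because $r\ll m$, we have $z=d^2/E=1-O(r/m)$, so
\[
\delta=1-\frac{kz}{k+1}\le\frac1{k+1}+O(r/m),
\]
which is small. Using $\sigma(\alpha)\le\alpha$ and $\max_{\alpha>0}\alpha\delta^{2\alpha}=1/(2e\log(1/\delta))$, the $r$th term is at most about
\[
\frac{F\,C\,(1+1/s)}{2e\,s\,\log(1/\delta)}\approx\frac{1}{2e\,r\log(2r)}.
\]
This bound is uniform in $\alpha$, which removes any need for parameter sampling. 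Summing over odd $r$ in $(r_1,r_2]$ gives about $\frac1{4e}\log\bigl(\log r_2/\log r_1\bigr)\approx\frac{\log 2}{4e}$, comfortably below the remaining budget. I would make the $O(r/m)$ error in $\delta$ explicit from \eqref{eq:scalarvars}, namely $1-z=N_0 s/E$, and bound $F\le(m^2/E)^k\le\exp(3k/m)$, $C$ and $(1+1/s)^{2\alpha}$ by explicit constants on this range. I would then check numerically that, at $m=32768$, the three pieces plus the terminal term total less than $15/16$. Every estimate is monotone in $m$, so the bound then holds for all larger $m$. The delicate points are the choice of the cutoffs and making the $\log\log$ sum fully explicit; if the margin is tight, I would shift $r_1$ upward, since $\Xi$ tolerates $r_1$ up to about $\sqrt{m}/10$.
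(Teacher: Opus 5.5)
Your approach is sound in substance, but the decisive middle range works differently from the paper's. The paper fixes $R_0=27$. Lemma~\ref{lem:continuous} covers $r\le25$, at a cost of about $0.340$. For \emph{every} $27\le r\le m-2$ the paper uses Lemma~\ref{lem:Jensen} with $F\le\frac{53}{50}e^{-r^3/(3m^2)}$ and $\delta\le q_0^{-1}(2r/m+1/(2r))$, splits $\delta^{\eta}$ by subadditivity, and sums in $r$ at \emph{fixed} $\alpha$ by an integral comparison. The bound $\sigma(\alpha)\le\eta/2$ then cancels the factor $1/\eta$ produced by $\int x^{-1\pm\eta}\,dx$. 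Each half costs nearly $1/4$, since as $\alpha\to0$ the $r$-sum is essentially harmonic. The total is about $0.930$, only $0.007$ below $15/16$.

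You instead maximize over $\alpha$ term by term, using $\max_\alpha\alpha\bar\delta^{2\alpha}=1/(2e\log(1/\bar\delta))$ with $\bar\delta=\delta|_{\alpha=0}$. This is legitimate: $\delta$ decreases in $\alpha$, while $F$, $C$, $1/s$ and $(1+1/s)^{2\alpha}$ are largest at $\alpha=1/2$. It trades the harmonic loss for a $\log\log$-type sum. That is why your lower cutoff must grow like $m^{1/3}$: with a fixed cutoff the middle sum grows like $\frac1{4e}\log\log m$. It is also why the cubic decay of $F$ must take over near $m^{2/3}$, where $\bar\delta\approx2r/m$ stops being small. At $m=2^{15}$ your pieces come to roughly $0.36+0.08+0.03$ (the last is about $0.06$ with your crude $(1+1/s)^{2\alpha}\le2$). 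So your margin is much larger, and the threshold $2^{15}$ could plausibly be lowered. The paper's $m$-independent constants give uniformity over $m\ge M_0$ for free; your $m$-dependent cutoffs do not.

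That uniformity is the one step you need to repair. The claim that every estimate is monotone in $m$ is unjustified, and false for individual pieces. With $\bar\delta_r\approx\frac1{2r}+\frac{2r}{m}$, the part of the middle sum over $m^{1/3}<r<\sqrt m/2$ is about $\frac1{4e}\log\frac{\log\sqrt m}{\log(2m^{1/3})}$, which increases with $m$. Also, a floor in $r_1$ makes $\Xi_{m,r_1}$ jump up each time $m$ passes a cube. So checking at $m=2^{15}$ is not enough. Bound each piece by an explicit function of $\log m$ and take its supremum over $m\ge2^{15}$. The middle sum is about $0.08$ at $2^{15}$ and tends to $\frac{\log(9/4)}{4e}\approx0.075$, so your margin absorbs this easily.

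Two smaller corrections:
\begin{enumerate}
\item Your heuristic $\delta\approx1/(2r)$ fails for $r\gtrsim\sqrt m/2$, where $2r/m$ dominates, so $\frac{\log2}{4e}$ is not the right value of the middle sum.
\item Just above $r_2\approx m^{2/3}$ one has $F\approx e^{-2/3}$, which is not superpolynomially small. The large range is small only because you keep $\sigma(\alpha)/s\lesssim1/(\pi r)$; the displayed $\sum_{r>r_2}e^{-cr^3/m^2}$ on its own is of order $m^{2/3}$.
\end{enumerate}
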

\begin{proof}
The case $\alpha=0$ is immediate. Set $M_0=32768$, $q_0=8191/8192$ and $R_0=27$.
We continue to use \eqref{eq:scalarvars}--\eqref{eq:deltaC}.

\smallskip\noindent\emph{Step 1: a factorial-ratio bound.}
Since $\alpha\le1/2$,
\[
 E\ge m^2(1-3/m),\qquad A=m\prod_{j=1}^r(m^2-j^2).
\]
Using $\log(1-x)\le-x$ and $-\log(1-x)\le x/(1-x)$ yields
\[
 \log F\le-\frac{2r^3}{3m^2}+\frac{6r+3}{m(1-3/m)}.
\]
Let $b_0=6/q_0$ and $c_0=3/q_0$. Retain one term $-r^3/(3m^2)$ and maximize the remaining cubic in $r$. Then
\[
 \log F\le-\frac{r^3}{3m^2}
 +\frac{2b_0^{3/2}}{3\sqrt m}+\frac{c_0}m.
\]
Now $b_0<601/100$, $(601/100)^3<225$, $\sqrt{M_0}>180$, and $c_0/M_0<1/8192$. Thus the last two terms are less than $1/18+1/8192$. The inequality $e^x\le(1-x)^{-1}$ and a rational comparison give
\begin{equation}\label{eq:Flarge}
 F\le\frac{53}{50}e^{-r^3/(3m^2)}.
\end{equation}

\smallskip\noindent\emph{Step 2: the first thirteen odd directions.}
For odd $r<R_0$, one has $r\le25$, and
\[
 \log\Xi_{m,r}\le\frac{(2r+1)(2r+2)}{m-r-1}
 \le\frac{2652}{32742}.
\]
Lemmas~\ref{lem:continuous} and $e^x\le(1-x)^{-1}$ imply
\begin{equation}\label{eq:headlarge}
 \sum_{\substack{r<R_0\\r\ \odd}}T_{m,r}(\alpha)
 \le\frac5{16}\frac{32742}{30090}=\frac{321}{944}.
\end{equation}

\smallskip\noindent\emph{Step 3: all higher nonterminal directions.}
Let $\eta=2\alpha\in(0,1]$. If $r\ge27$, then
\[
 \frac1s\le\frac{54}{53r},\qquad
 (1+1/s)^\eta\le\frac{55}{53},\qquad
 C\le1+\frac2{27^2}+\frac3{32768}<\frac{257}{256}.
\]
Furthermore,
\[
 \delta=\frac{N_0s+d^2/(2r+2)}E
 \le\frac1{q_0}\left(\frac{2r}{m}+\frac1{2r}\right).
\]
Combining Lemma~\ref{lem:Jensen}, \eqref{eq:Flarge} and $(x+y)^\eta\le x^\eta+y^\eta$ gives
\begin{equation}\label{eq:tailpoint}
 T_{m,r}(\alpha)\le
 P\frac{\sigma(\alpha)}r e^{-r^3/(3m^2)}
 \left[\left(\frac{2r}m\right)^\eta+(2r)^{-\eta}\right],
\end{equation}
where
\begin{equation}\label{eq:Pconstant}
 P=\frac{53}{50}\frac{54}{53}\frac{55}{53}
   \frac{257}{256}\frac{8192}{8191}
  =\frac{2442528}{2170615}.
\end{equation}
For a positive decreasing function $f$ and odd $R_0$,
\begin{equation}\label{eq:oddgrid}
 \sum_{j=0}^\infty f(R_0+2j)
 \le f(R_0)+\frac12\int_{R_0}^{\infty}f(x)\dd x.
\end{equation}
Apply this first to $x^{-1-\eta}$, discarding the exponential in the second part of \eqref{eq:tailpoint}. Its contribution, before multiplication by $P$, is at most
$\sigma(\alpha)/R_0+1/4$, because $\sigma(\alpha)\le\alpha=\eta/2$.

For the first part use the decreasing function
$x^{\eta-1}e^{-x^3/(3m^2)}$ and
\[
 \int_0^\infty x^{\eta-1}e^{-x^3/(3m^2)}\dd x
 =\frac{(3m^2)^{\eta/3}}3\Gamma(\eta/3).
\]
The first grid term is at most $\sigma(\alpha)/R_0$, since $2R_0\le m$. The integral contribution is at most
\[
 \frac{\sigma(\alpha)}{2\eta}\Gamma(1+\eta/3)
 \left(\frac{24}{m}\right)^{\eta/3}\le\frac14.
\]
Here log-convexity and $\Gamma(1)=\Gamma(2)=1$ give $\Gamma(1+\eta/3)\le1$.
Consequently,
\begin{equation}\label{eq:taillarge}
 \sum_{\substack{27\le r\le m-2\\r\ \odd}}T_{m,r}(\alpha)
 \le P\left(\frac12+\frac2{\pi R_0}\right)
 <P\left(\frac12+\frac2{81}\right)
 =\frac{768944}{1302369}.
\end{equation}
The positive majorants in this step were extended to infinitely many directions; thus no direction tail is omitted.

\smallskip\noindent\emph{Step 4: the endpoint and the final rational comparison.}
Lemma~\ref{lem:last} bounds the terminal direction by $10^{-6}$ for $m\ge M_0$. Equations \eqref{eq:headlarge} and \eqref{eq:taillarge} now give
\begin{equation}\label{eq:largeconstant}
 \Theta_{\alpha,m}\le
 \frac{321}{944}+\frac{768944}{1302369}+\frac1{10^6}
 =\frac{71496550902271}{76839771000000}<\frac{15}{16}.
\end{equation}
The difference from $15/16$ is the positive rational number
$540734410229/76839771000000$. This proves the proposition.
\end{proof}

\section{Finite certificates for the remaining orders}\label{sec:certificates}
This section describes all quantities checked by the accompanying programs. The computations contain no floating-point decisions. Both the analytic meaning of the certificates and the direction of every enclosure are specified below.

\subsection{Orders one through twenty}
Let
\begin{equation}\label{eq:kappa}
 \kappa_{m,r}=(2r+1)\binom{m+r}{2r+1}^{\!2}(r!)^2,
 \qquad K_m=\sum_{\substack{1\le r<m\\r\ \odd}}\kappa_{m,r}S_{m,r}(0).
\end{equation}
For $0\le\alpha\le1$, \eqref{eq:Smono} and
\[
 |\lambda_{m,r}(\alpha)|
 =\alpha\kappa_{m,r}\prod_{j=1}^r(1-\alpha^2/j^2)
 \le\alpha(1-\alpha^2)\kappa_{m,r}
\]
imply
\begin{equation}\label{eq:Kbound}
 \Theta_{\alpha,m}\le\alpha(1-\alpha^2)K_m.
\end{equation}

For each odd $r<m\le20$, the supplement supplies positive rational numbers $b_{m,r}$ and $C_{m,r}^{\rm pol}$. The polynomial certificate is that every coefficient of
\begin{equation}\label{eq:polycert}
 C_{m,r}^{\rm pol}Q_m(t)
 -\kappa_{m,r}p_{m,r}(t;0)^2(t+b_{m,r})^{2r+1}
\end{equation}
is nonnegative. Both polynomials have degree $2m-1$. Therefore
\begin{equation}\label{eq:polytail}
 \kappa_{m,r}S_{m,r}(0)
 \le C_{m,r}^{\rm pol}
 \left(b_{m,r}^{-2r-1}+\frac{b_{m,r}^{-2r}}{2r}\right).
\end{equation}
The last step is the integral comparison for the positive decreasing function $(t+b_{m,r})^{-2r-1}$; it includes all $n\ge0$.

\begin{lemma}[Low-order certificate]\label{lem:lowcert}
The supplied rational data satisfy \eqref{eq:polycert} for all odd $r<m$, $2\le m\le20$, and
\[
 \sum_{\substack{r<m\\r\ \odd}}C_{m,r}^{\rm pol}
 \left(b_{m,r}^{-2r-1}+\frac{b_{m,r}^{-2r}}{2r}\right)<\frac34.
\]
Consequently $\Theta_{\alpha,m}<3/10$ for $2\le m\le20$ and $0\le\alpha\le1$. For $m=1$, $\Theta_{\alpha,1}=0$.
\end{lemma}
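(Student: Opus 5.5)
The lemma has two parts: a finite exact computation, and an analytic deduction from it. I would treat them separately.

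\emph{Certificate check.} For each $2\le m\le 20$ and each odd $r<m$, I expand $Q_m(t)=D_m(t)(D_m(t)-H_m(t))$ and $p_{m,r}(t;0)^2$ as polynomials in $t$ with integer coefficients, using \eqref{eq:pnew} at $\alpha=0$. Each term there has the factor $(r+1)_q/(2r+2)_q$, so the coefficients are rational and exact. I then form \eqref{eq:polycert} with the supplied rationals $b_{m,r},C^{\rm pol}_{m,r}$ and check coefficientwise nonnegativity in exact rational arithmetic. The degrees match: $Q_m$ has degree $2m-1$ because the leading terms of $D_m-H_m$ cancel, and the subtracted term has degree $2(m-r-1)+2r+1=2m-1$. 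So the leading coefficient must also be checked, and this is what forces $C^{\rm pol}_{m,r}$ to be at least about $\kappa_{m,r}/(\text{leading coeff of }Q_m)$. Finally I sum the rational right-hand sides of \eqref{eq:polytail} and compare the total with $3/4$ exactly.

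\emph{From certificates to $\Theta$.} Nonnegative coefficients give, for every $t\ge0$,
\[
\kappa_{m,r}p_{m,r}(t;0)^2\le C^{\rm pol}_{m,r}Q_m(t)(t+b_{m,r})^{-2r-1}.
\]
Here $Q_m(t)>0$ for $t\ge0$: $D_m>H_m$ holds termwise, and the coefficients of $D_m-H_m$ are positive, as already noted before \eqref{eq:Smono}. Dividing by $Q_m(n)$ and summing over $n\ge0$ gives $\kappa_{m,r}S_{m,r}(0)\le C^{\rm pol}_{m,r}\sum_{n\ge0}(n+b)^{-2r-1}$. The sum is bounded by $b^{-2r-1}+\int_0^\infty(t+b)^{-2r-1}\dd t=b^{-2r-1}+b^{-2r}/(2r)$, which is \eqref{eq:polytail}. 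Summing over odd $r$ then yields $K_m<3/4$. By \eqref{eq:Kbound}, $\Theta_{\alpha,m}\le \alpha(1-\alpha^2)K_m$. On $[0,1]$ the function $\alpha(1-\alpha^2)$ peaks at $\alpha=1/\sqrt3$ with value $2/(3\sqrt3)<0.385$. Hence $\Theta_{\alpha,m}<0.385\cdot 3/4<3/10$, since $0.385\cdot0.75=0.28875$. For $m=1$ there are no odd $r$ with $1\le r<1$, so the sum defining $\Theta_{\alpha,1}$ is empty and equals $0$.

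\emph{Main obstacle.} The deduction is routine, so the difficulty lies in producing certificates that actually work. For large $m$ near $20$, the polynomial $p_{m,r}(t;0)^2(t+b)^{2r+1}$ may have middle coefficients exceeding the corresponding coefficients of $Q_m$ times any modest $C$. One then needs $b$ small enough to keep the coefficients controlled, yet large enough that $b^{-2r-1}$ stays small. For each pair $(m,r)$ I would first choose $b$ near the natural scale $m$ and take $C$ as the maximum coefficient ratio. If the resulting total exceeds $3/4$, I would tune $b$ per $(m,r)$ by a rational search over a grid.
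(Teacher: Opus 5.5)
Your proposal is correct and follows the paper's proof: exact rational verification of the coefficient certificates \eqref{eq:polycert}, the integral comparison \eqref{eq:polytail} to obtain $K_m<3/4$, and then \eqref{eq:Kbound} with $\max_{[0,1]}\alpha(1-\alpha^2)=2/(3\sqrt3)$, where the paper uses the cruder bound $2/5$, which gives exactly $3/10$. The differences are only cosmetic. For example, you rebuild $p_{m,r}(t;0)$ from \eqref{eq:pnew} rather than \eqref{eq:pold}, and these agree by Lemma~\ref{lem:pnew}; also, those coefficients are rational rather than integer, as you yourself note.
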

\begin{proof}[Certificate verification]
The file \texttt{low\_order/certificate\_data.json} contains all the rational pairs.
The standard-library verifier \texttt{low\_order/verify\_certificates.py} reconstructs $p_{m,r}(t;0)$ from \eqref{eq:pold}, constructs $Q_m$, verifies all $2770$ coefficient comparisons in \eqref{eq:polycert}, and checks the indicated aggregate bound for each of the nineteen orders. It verifies the complete order and direction lists before testing the inequalities. All checks pass with exact rational arithmetic. Finally,
$\max_{[0,1]}\alpha(1-\alpha^2)=2/(3\sqrt3)<2/5$,
so \eqref{eq:Kbound} gives the claimed $3/10$ bound.
\end{proof}

\subsection{A H\"older envelope for the continuous majorant}
For $r\ge1$, let
\[
 W_r(\alpha)=2^{-2\alpha}\Beta(r+1/2-\alpha,2\alpha+1).
\]
Its beta integral has the form of an integral of a positive function raised to a power affine in $\alpha$; H\"older's inequality makes $W_r$ log-convex on $[0,1/2]$. Since
\[
 W_r(0)=\frac1{r+1/2},\qquad W_r(1/2)=\frac1{2r(r+1)},
\]
one obtains
\begin{equation}\label{eq:holderenvelope}
 W_r(\alpha)\le\frac1{r+1/2}
 \left(\frac{r+1/2}{2r(r+1)}\right)^{2\alpha}.
\end{equation}
This bound retains the parameter exponent, instead of replacing all parameters by zero.

\subsection{An envelope on a complete order--parameter rectangle}
Let $L\le m\le U$ be an interval of integers with $L\ge21$, and let
$\ell\le\alpha\le u\le1/2$. For each $1\le r\le U-2$, put
\begin{equation}\label{eq:rectvars}
 \mu=\max\{L,r+2\},\qquad q=r+1,\qquad k=2r+1,
 \qquad N=2\mu-q,\qquad h=N/\mu^2.
\end{equation}
Every actual nonterminal direction has $m\ge\mu$. Define the following positive rational quantities:
\begin{align}
 P_{U,r}&=\prod_{j=1}^r\left(1-\frac{j^2}{U^2}\right)^2,
 &\overline F&=\frac{P_{U,r}}{[1-(1+u)h]^k},\label{eq:Fbar}\\
 \overline\delta&=1-\frac{k}{k+1}
 \frac{(\mu-q)^2}{\mu^2-(1+\ell)N},
 &X&=\left(1+\frac1{r-u}\right)\overline\delta,\label{eq:Xbar}\\
 \overline C&=1+\frac{(U-q)^2}{U^2}\frac{1+2u}{2r(r-u)}
 +\frac{kh}{2r}.&&\label{eq:Cbar}
\end{align}
For nonintegral powers set
\begin{equation}\label{eq:Zbar}
 Z=\begin{cases}X^{2\ell},&X\le1,\\ X^{2u},&X>1.\end{cases}
\end{equation}
The sine product \cite[\S4.22]{DLMF} gives the rational upper bound
\begin{equation}\label{eq:sigmabar}
 \sigma(\alpha)\le\overline\sigma(u)
 :=u\prod_{j=1}^{16}\left(1-\frac{u^2}{j^2}\right).
\end{equation}
Indeed, $\sigma$ is increasing on $[0,1/2]$, and every omitted factor of its infinite product is in $(0,1]$.

\begin{lemma}\label{lem:rectangle}
Every actual nonterminal direction in the above rectangle satisfies
\begin{equation}\label{eq:Jbar}
 T_{m,r}(\alpha)\le
 J_{L,U,r,\ell,u}:=
 \frac{\overline\sigma(u)}{r-u}\,\overline F\,\overline C\,Z.
\end{equation}
For $r\le9$, it also satisfies
\begin{equation}\label{eq:Hbar}
 T_{m,r}(\alpha)\le H_{L,U,r,\ell,u}:=
 \Xi_{\mu,r}\frac{\overline\sigma(u)}{r+1/2}
 \left(\frac{r+1/2}{2r(r+1)}\right)^{2\ell}
 \left(1+\frac{2u+1}{2r(r+1)}\right).
\end{equation}
\end{lemma}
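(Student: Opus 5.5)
The plan is to derive \eqref{eq:Jbar} from Lemma~\ref{lem:Jensen}, and \eqref{eq:Hbar} from Lemma~\ref{lem:continuous} combined with the H\"older envelope \eqref{eq:holderenvelope}. In each case every factor depending on the actual pair $(m,\alpha)$ is replaced by a rational majorant that is uniform over $\mu\le m\le U$ and $\ell\le\alpha\le u$. Fix an actual nonterminal direction, so $1\le r\le m-2$ and $m\ge\mu$. The case $\alpha=0$ is trivial because $T_{m,r}=0$, so assume $0<\alpha\le u\le1/2$.

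For \eqref{eq:Jbar}, start from \eqref{eq:Jensen} and treat the factors one at a time.
\begin{enumerate}
\item Since $\sigma$ is increasing on $[0,1/2]$, we have $\sigma(\alpha)\le\overline\sigma(u)$ by \eqref{eq:sigmabar}. Also $1/s=1/(r-\alpha)\le1/(r-u)$.
\item For $F=A^2/E^k$, use $A=m\prod_{j\le r}(m^2-j^2)$, so that $A^2=m^{2k}\prod_{j\le r}(1-j^2/m^2)^2\le m^{2k}P_{U,r}$, because each factor increases with $m\le U$. Write $E=m^2(1-(1+\alpha)N_0/m^2)$. Since $N_0/m^2=(2m-q)/m^2$ is decreasing in $m$ for $m\ge q$, it is at most $h$, and hence $E\ge m^2[1-(1+u)h]$. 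Together these give $F\le\overline F$.
\item For $C$, bound $d^2/m^2=(m-q)^2/m^2\le(U-q)^2/U^2$, which holds because this quantity increases in $m$. Then use $1+2\alpha\le1+2u$, $s\ge r-u$ and $N_0/m^2\le h$ to get $C\le\overline C$.
\item The remaining factor is $(1+1/s)^{2\alpha}\delta^{2\alpha}$. I will bound it by $X^{2\alpha}$, where I need $\delta\le\overline\delta$. Here $z=d^2/E=(m-q)^2/(m^2-(1+\alpha)N_0)\ge(m-q)^2/(m^2-(1+\ell)N_0)$. The ratio $(m-q)^2/(m^2-(1+\ell)(2m-q))$ must then be bounded below by its value at $m=\mu$, which requires monotonicity in $m$.
\item Once $X^{2\alpha}$ is in hand, monotonicity in the exponent gives $X^{2\alpha}\le Z$ in both cases of \eqref{eq:Zbar}.
\end{enumerate}

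The monotonicity of the ratio in step 4 is the main obstacle. Writing the ratio as $1-\bigl[(2q-2-2\ell)m-\ldots\bigr]/(\ldots)$, I would check the sign of its derivative directly. Alternatively, I would note that $1-z=N_0s/E$ decreases in $m$, since $N_0/E\sim2/m$. If a full monotonicity argument fails, I would fall back on the explicit comparison $N_0s/E\le$ its value at $m=\mu$, obtained by clearing denominators, which leaves a polynomial inequality in $m\ge\mu\ge r+2$.

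For \eqref{eq:Hbar}, apply \eqref{eq:TU}. First, $\Xi_{m,r}$ is decreasing in $m$, so $\Xi_{m,r}\le\Xi_{\mu,r}$. In \eqref{eq:Ur}, bound $\sigma(\alpha)\le\overline\sigma(u)$ and $(2\alpha+1)\le2u+1$ in the correction factor. The product $2^{-2\alpha}\Beta(\cdot)$ is $W_r(\alpha)$, and \eqref{eq:holderenvelope} applies to it. The base $(r+1/2)/(2r(r+1))$ is at most $1$ for $r\ge1$, so the envelope is decreasing in $\alpha$ and is bounded by its value at $\alpha=\ell$. The restriction $r\le9$ only indicates where this bound is used and plays no role in the proof.
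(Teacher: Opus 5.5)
Your proposal is correct and follows the paper's proof essentially verbatim. You apply Lemma~\ref{lem:Jensen}, respectively Lemma~\ref{lem:continuous} together with \eqref{eq:holderenvelope}, and replace each factor by its worst case over $\mu\le m\le U$, $\ell\le\alpha\le u$. The monotonicity you flag in step 4 is not an obstacle, so no fallback is needed: $1-z=N_0s/E=\bigl(1+(m-q)^2/((2m-q)s)\bigr)^{-1}$, and $(m-q)^2/(2m-q)$ has derivative $2m(m-q)/(2m-q)^2>0$, so $z$ increases in $m$; the paper records this as $\partial z/\partial m=2(m-q)(r-\alpha)m/E^2>0$.
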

\begin{proof}
The needed monotonicities are factorwise; no monotonicity of the full quantity $\Theta$ is presumed. The function
$h(m)=(2m-q)/m^2$ decreases for $m>q$, whereas
$\prod_{j=1}^r(1-j^2/m^2)^2$ increases. Thus $F\le\overline F$.
For $E=m^2-(1+\alpha)(2m-q)$ and $z=(m-q)^2/E$,
\[
 \frac{\partial z}{\partial m}
 =\frac{2(m-q)(r-\alpha)m}{E^2}>0,
 \qquad
 \frac{\partial z}{\partial\alpha}
 =\frac{(m-q)^2(2m-q)}{E^2}>0.
\]
It follows that $\delta\le\overline\delta$. Also $s^{-1}\le(r-u)^{-1}$. The first nonconstant term of $C$ is bounded using $m=U$, $\alpha=u$; the second uses $m=\mu$. This gives $C\le\overline C$.
The quantity $[(1+1/s)\delta]^{2\alpha}$ is at most $Z$ by the two branches in \eqref{eq:Zbar}. Lemma~\ref{lem:Jensen} proves \eqref{eq:Jbar}.

The product defining $\Xi_{m,r}$ decreases in $m$, and the base in \eqref{eq:holderenvelope} is below $1$. Hence Lemma~\ref{lem:continuous} and \eqref{eq:holderenvelope} give \eqref{eq:Hbar}. We use this second bound only for $r\le9$ to keep its evaluation inexpensive; its validity is not restricted to those directions.
\end{proof}

Combining the direction bounds gives a single majorant for the whole rectangle:
\begin{equation}\label{eq:rectangleB}
 \mathcal B(L,U;\ell,u)=\frac{12}{L4^L}+
 \sum_{\substack{1\le r\le U-2\\r\ \odd}}
 \begin{cases}
 \min\{J_{L,U,r,\ell,u},H_{L,U,r,\ell,u}\},&r\le9,\\
 J_{L,U,r,\ell,u},&r>9.
 \end{cases}
\end{equation}
Directions absent at a particular $m$ only add nonnegative terms to this majorant. The actual terminal direction $r=m-1$ is covered separately by $12/(L4^L)$ using Lemma~\ref{lem:last}. Thus $\Theta_{\alpha,m}\le\mathcal B$ throughout the rectangle, even if $r=m-1$ appears elsewhere as an additional positive envelope term.

\subsection{The complete finite verification}
Generate consecutive integer blocks by
\begin{equation}\label{eq:blockrule}
 L_0=21,\qquad U_j=\min\{32767,\lfloor11L_j/10\rfloor\},
 \qquad L_{j+1}=U_j+1.
\end{equation}
This gives $75$ blocks, with no missing or repeated orders. Use the sixteen closed parameter intervals
\begin{equation}\label{eq:parametergrid}
 [\ell,u]=[j/32,(j+1)/32],\qquad j=0,\ldots,15.
\end{equation}

\begin{lemma}[Intermediate-order certificate]\label{lem:middlecert}
For every one of the $1200$ rectangles specified by \eqref{eq:blockrule}--\eqref{eq:parametergrid},
\begin{equation}\label{eq:middlecert}
 \mathcal B(L,U;\ell,u)<\frac9{10}.
\end{equation}
Consequently $\Theta_{\alpha,m}<9/10$ whenever $21\le m\le32767$ and $0\le\alpha\le1/2$.
\end{lemma}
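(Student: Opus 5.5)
The lemma splits into two parts. The ``consequently'' part is a formal deduction from Lemma~\ref{lem:rectangle} and \eqref{eq:rectangleB}. The main claim \eqref{eq:middlecert} is a finite computation that I would certify with exact or outward-rounded rational arithmetic.

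For the deduction, I first check that the blocks from \eqref{eq:blockrule} tile $\{21,\dots,32767\}$. Each $U_j\ge L_j$, since $\lfloor 11L_j/10\rfloor\ge L_j$ for $L_j\ge 21$, and $L_{j+1}=U_j+1$. The last block stops at $32767$. A direct enumeration confirms there are $75$ blocks with no gaps or overlaps. The sixteen intervals \eqref{eq:parametergrid} cover $[0,1/2]$. For $\alpha=0$ one has $\Theta_{0,m}=0$ by definition, so the first interval is only needed for $\alpha\in(0,1/32]$. There I must make sure the envelope does not rely on $\ell>0$: the Jensen bound needs $0<\alpha\le1/2$, and in \eqref{eq:Zbar} $\ell=0$ only gives $X^0=1$, which is still an upper bound when $X\le1$.

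Given $(m,\alpha)$ in a rectangle, I write $\Theta_{\alpha,m}$ as the sum of $T_{m,r}$ over odd $r\le m-2$ plus the terminal term when $m-1$ is odd. Lemma~\ref{lem:last} bounds the terminal term by $12/(m4^m)\le 12/(L4^L)$. Each nonterminal term is bounded by $J$, or by $\min\{J,H\}$ when $r\le9$, using Lemma~\ref{lem:rectangle}. Extra terms with $m-1\le r\le U-2$ are nonnegative. This gives $\Theta_{\alpha,m}\le\mathcal B<9/10$.

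For \eqref{eq:middlecert} itself, I would compute each $\mathcal B(L,U;\ell,u)$ as a certified upper bound. The rational parts are exact: $\overline F$, $\overline\delta$, $X$, $\overline C$, $\overline\sigma(u)$, $\Xi_{\mu,r}$, and $P_{U,r}$. The difficulty is that $\overline F$ has a power up to $k\approx 6.5\times10^4$ and the sum has up to $\sim16000$ terms per rectangle. Exact rationals would then be huge, so I would use interval arithmetic with directed rounding. Each positive factor gets an upward-rounded bound, and the subtracted quantities in $1-(1+u)h$ and $\overline\delta$ get lower or upper bounds in the correct direction. The real powers $X^{2\ell}$, $X^{2u}$ and $(\cdot)^{2\ell}$ in $H$ would be enclosed via exp/log with certified error bounds. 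Alternatively, since $2\ell,2u$ are multiples of $1/16$, I can bound $X^{j/16}$ by repeated certified square-root upper bounds computed with integer square roots. That keeps everything in integer arithmetic, which matches ``no floating-point decisions.''

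The summation is then done with upward rounding, possibly truncating the $r$-sum. Terms with large $r$ have $\overline F$ decaying like $e^{-2r^3/(3\mu^2)}$, so a tail can be bounded by a geometric majorant instead of summed. Then I check $\mathcal B<9/10$ for all $1200$ rectangles.

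The main obstacle is the small-$L$ blocks near $L=21$ with $u$ near $1/2$. There $\overline F$, $\overline C$ and the $\sigma$-factor are least favorable, and the margin below $9/10$ is presumably thinnest. This is why the $H$ bound from Lemma~\ref{lem:continuous} is included for $r\le9$, and why the blocks have ratio $11/10$. If some rectangle failed, I would refine it by splitting the parameter or order interval. As stated, though, the lemma asserts this specific grid passes, so the proof is the verified run of the supplement's program on these $1200$ rectangles.
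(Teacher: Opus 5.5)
Your proposal is correct and follows essentially the paper's route: the consequence is deduced from Lemma~\ref{lem:rectangle}, with the terminal direction covered by $12/(L4^L)$. The inequality \eqref{eq:middlecert} is certified by an upward-rounded integer-arithmetic evaluation of $\mathcal B$, with the $j/16$ powers enclosed by four integer square roots, just as in the supplement. The one difference is that the paper sums every odd $r\le U-2$ (about $2.96$ million direction--parameter terms in total), so your optional tail truncation is unnecessary; its decay heuristic should also involve $U$, through $P_{U,r}$, rather than $\mu$.
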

\begin{proof}[Certificate verification]
The program \texttt{verify\_closure.py} recomputes \eqref{eq:rectangleB} from the deterministic rules above. It stores nonnegative upper bounds on the grid $2^{-96}\mathbb Z$. Every addition, multiplication, division and positive integer power is rounded upward. The exponents in \eqref{eq:Zbar} and \eqref{eq:Hbar} are integer multiples of $1/16$, and are enclosed by four upward-rounded integer square roots followed by an integer power. The sine factor is enclosed by the rational product \eqref{eq:sigmabar}. Thus every output is a rigorous rational upper bound; implementation details are given in Appendix~\ref{app:verification}.

All $1200$ rectangles pass the integer test $10N<9\cdot2^{96}$, where $N/2^{96}$ is the computed total upper bound. The largest certified value is
\begin{equation}\label{eq:worstcert}
 \frac{69781001268962619723556217951}
 {79228162514264337593543950336}
 <0.880760061253<\frac9{10}.
\end{equation}
The displayed decimal is rounded upward and is not used in the test.
The computation evaluates $2{,}956{,}912$ direction--parameter cell terms.
Since Lemma~\ref{lem:rectangle} bounds complete rectangles and \eqref{eq:finiteintegral} already bounds the full $n$ series, neither parameter sampling nor a truncated operator is involved.
\end{proof}

\section{Completion of the proof and the negative parameters}\label{sec:completion}
\begin{proof}[Proof of Theorem~\ref{thm:main}]
Boundedness was established in Section~\ref{sec:defect}. On the half interval $0\le\alpha\le1/2$, Lemmas~\ref{lem:lowcert} and \ref{lem:middlecert}, and Proposition~\ref{prop:large}, give the following complete coverage:
\begin{center}
\begin{tabular}{@{}lll@{}}
\toprule
Order range & Upper bound for $\Theta_{\alpha,m}$ & Justification\\
\midrule
$1\le m\le20$ & $3/10$ & Rational coefficient certificates\\
$21\le m\le32767$ & $9/10$ & Rational rectangle certificates\\
$m\ge32768$ & $15/16$ & Analytic estimate\\
\bottomrule
\end{tabular}
\end{center}
Thus \eqref{eq:halftarget} holds for all positive integer orders. By \eqref{eq:halftransfer}, $\Theta_{\alpha,m}<35/36$ on all of $[0,1]$.
Lemma~\ref{lem:scalarcriterion} and \eqref{eq:transfer} prove \eqref{eq:mainweight} on this basic interval.

Let $S$ be the unilateral shift and $P_0=e_0e_0^*$. Comparison of matrix entries gives
\begin{equation}\label{eq:shift}
 M_{\alpha,m}S=SM_{\alpha+1,m},\qquad
 S^*R_{\alpha,m}S=R_{\alpha+1,m}.
\end{equation}
For $T=M_{\alpha,m}$ and $A=S^*TS=M_{\alpha+1,m}$, the invariant-subspace identity is
\begin{equation}\label{eq:shiftdefect}
 A^*A-AA^*=S^*(T^*T-TT^*)S+S^*TP_0T^*S.
\end{equation}
If $T^*T-TT^*\ge cT^*R_{\alpha,m}T$, then \eqref{eq:shift}--\eqref{eq:shiftdefect} give
$A^*A-AA^*\ge cA^*R_{\alpha+1,m}A$.
Repeatedly applying this implication proves \eqref{eq:mainweight} for every $\alpha\ge0$.

It remains to exclude $-1<\alpha<0$. Fix such a parameter and any $m\ge1$.
The monic degrees of the polynomials give, as $n\to\infty$,
\[
 r_n(\alpha)=\frac{m^2}{n}+O(n^{-2}),\qquad
 v_{m,0}(n)=\frac1n+O(n^{-2}),\qquad
 v_{m,r}(n)=O(n^{-r-1})\quad(r\ge1).
\]
Also $\lambda_{m,0}(\alpha)=m^2\alpha$.
Take $x^{(N)}=\sum_{n=0}^Ne_n$. The exact defect identity \eqref{eq:defect} yields
\begin{equation}\label{eq:negative}
 \|x^{(N)}\|^2-\|Bx^{(N)}\|^2
 =m^2\alpha(\log N)^2+O(\log N)<0
\end{equation}
for all sufficiently large $N$. Indeed, only the $r=0$ rank-one term has a squared logarithmic partial sum; the remaining rank-one partial sums are bounded.
Now $y^{(N)}=Lx^{(N)}$ is finitely supported, and
\[
 My^{(N)}=x^{(N)},\qquad M^*y^{(N)}=Bx^{(N)}.
\]
Equation \eqref{eq:negative} is therefore the quadratic form of $M^*M-MM^*$ at $y^{(N)}$, which is negative. Thus $M_{\alpha,m}$ is not hyponormal. This proves both directions of the theorem.
\end{proof}

\begin{remark}
The special case $m=2$ gives an affirmative answer to \cite[Conjecture 3.2]{R18}, and $m=3$ includes both noninteger parameter intervals omitted from \cite[Corollary 2.4]{R19}. The proof treats the arbitrary-order algebra independently of the finite computation. In particular, checking finitely many orders is not used as a substitute for the defect formula at all orders.
\end{remark}

\appendix
\section{Verification specification and reproducibility}\label{app:verification}
\subsection{Exact directed rounding}
Let $Q=2^{96}$. An integer $x\ge0$ represents an upper bound $x/Q$ for a nonnegative real quantity. For nonnegative integers $p$ and positive integers $q$, the verifier uses
\[
 \operatorname{rat}(p,q)=\left\lceil\frac{Qp}{q}\right\rceil,
 \qquad
 \operatorname{mul}(x,y)=\left\lceil\frac{xy}{Q}\right\rceil.
\]
Positive integer powers are evaluated by repeated squaring with this multiplication. The upward square-root operation returns the smallest integer $z$ satisfying $z^2\ge xQ$. Four iterations enclose the sixteenth root. These operations preserve upper bounds by induction. The essential routines are:
\begin{lstlisting}[language=Python]
from math import isqrt
BITS = 96
ONE = 1 << BITS

def ceildiv(p, q):
    assert p >= 0 and q > 0
    return (p + q - 1) // q

def rat(p, q=1):
    return ceildiv(p * ONE, q)

def mul(x, y):
    assert x >= 0 and y >= 0
    return ceildiv(x * y, ONE)

def power(x, n):
    y = ONE
    while n:
        if n & 1:
            y = mul(y, x)
        n >>= 1
        if n:
            x = mul(x, x)
    return y

def root16(x):
    for _ in range(4):
        n = x * ONE
        z = isqrt(n)
        x = z + (z * z < n)
    return x

def pow16(x, p):
    if p == 0:
        return ONE
    if p == 16:
        return x
    return power(root16(x), p)
\end{lstlisting}
The distributed verifier rejects Python's \texttt{-O} and \texttt{-OO} modes before executing the checks, so that assertion removal cannot lead to a false success report.

All rational denominators in \eqref{eq:Fbar}--\eqref{eq:Cbar} are evaluated exactly and proved positive by $\mu\ge r+2$, $r\ge1$, $u\le1/2$. In particular, the algorithm never divides by a rounded-up denominator when an upper bound is required. If an upper enclosure of $X$ crosses $1$, the branch in \eqref{eq:Zbar} remains safe: below $1$ the smaller exponent is used, and above $1$ the larger exponent yields a bound at least $1$ whenever the exact base was below $1$.

For each order block, the product $P_{U,r}$ is accumulated successively in $r$, including even $r$ in the product although only odd $r$ are added to the total. If a very small positive product rounds up to $2^{-96}$, it remains a valid upper bound; it is never replaced by zero. The terminal-direction term is added separately in every parameter interval.

\subsection{The finite data and their tests}
The low-order data contain one pair $(b_{m,r},C_{m,r}^{\rm pol})$ for each odd $r<m$, $2\le m\le20$. The verifier checks that this list is complete, reconstructs the polynomials from \eqref{eq:pold}, verifies every coefficient of \eqref{eq:polycert}, and then proves each aggregate bound in Lemma~\ref{lem:lowcert}. The supplied numbers are witnesses to finite rational inequalities, rather than values trusted without verification.

The intermediate-order verifier has no fitted constants or floating-point input. It generates \eqref{eq:blockrule}--\eqref{eq:parametergrid}, evaluates the rational envelopes of Lemma~\ref{lem:rectangle}, encloses the fractional powers as above, and tests \eqref{eq:middlecert}. The emitted certificate records all $1200$ dyadic upper bounds. The wrapper verifies complete order and parameter coverage and compares the recomputed numerators to the archived certificate.

The principal files, relative to the \texttt{supplementary} directory, are:
\begin{center}
\begin{tabular}{@{}ll@{}}
\toprule
File & Role\\
\midrule
\texttt{verify\_all.py} & One-command verification and coverage checks\\
\texttt{verify\_closure.py} & Recomputes every rectangle bound\\
\texttt{closure\_certificate.json} & Archived dyadic upper bounds\\
\texttt{low\_order/verify\_certificates.py} & Low-order rational polynomial checks\\
\texttt{low\_order/certificate\_data.json} & Low-order rational witnesses\\
\texttt{audit\_fresh.py} & Additional direct finite-column checks\\
\bottomrule
\end{tabular}
\end{center}
Run the essential verification from the supplement directory with
\begin{lstlisting}
python verify_all.py
\end{lstlisting}
The essential checks use only the Python standard library. The optional symbolic audit \texttt{audit\_general\_formula.py} requires SymPy and is not part of the finite proof certificate. Additional finite-column and arithmetic audits are diagnostic checks; the arbitrary-order algebra is proved in Section~\ref{sec:defect}, not inferred from those examples.

The electronic supplement includes the verification logs and a SHA-256 manifest. This is an integer-arithmetic computer-assisted proof, not a verification in a formal proof assistant. The executable checks certify finite inequalities; the operator-theoretic reductions and analytic estimates must be read together with them.

\end{document}